\newtheorem{alphtheorem}{Theorem}
\newtheorem{alphlemma}{Lemma}
\newtheorem{theorem}{Theorem}
\newtheorem{lemma}[theorem]{Lemma}
\newtheorem{claim}[theorem]{Claim}
\newtheorem{corollary}[theorem]{Corollary}
\DeclareMathOperator\G{\mathcal{G}}
\title{ A Catlin-type Theorem for Graph partitioning Avoiding prescribed  Subgraphs}
\author{Yaser Rowshan and Ali Taherkhani}
\address{Y. Rowshan, 
Department of Mathematics, Institute for Advanced Studies in Basic Sciences (IASBS), Zanjan 45137-66731, Iran}
\email{y.rowshan@iasbs.ac.ir}
\address{A. Taherkhani, 
Department of Mathematics, Institute for Advanced Studies in Basic Sciences (IASBS), Zanjan 45137-66731, Iran}
\email{ali.taherkhani@iasbs.ac.ir}
\begin{document}
\maketitle 

\begin{abstract} 
As an extension of the Brooks theorem, Catlin in 1979 showed 
that if $H$ is neither an odd cycle nor a complete graph with maximum degree $\Delta(H)$, then $H$ has a  vertex $\Delta(H)$-coloring such that one of the color classes is a maximum independent set. 
 Let  $G$ be a connected graph of order at least $2$. A $G$-free $k$-coloring of a graph $H$ is a partition of the vertex set of $H$ into  $V_1,\ldots,V_k$ such that $H[V_i]$, the  subgraph induced on $V_i$, does not contain 
  any subgraph isomorphic to $G$.
  As a generalization of Catlin's theorem we show that a graph
   $H$ has a $G$-free $\lceil{\Delta(H)\over \delta(G)}\rceil$-coloring for which 
   one of the color classes is a maximum $G$-free subset of $V(H)$ if 
   $H$ satisfies the following conditions; (1) $H$ is  not isomorphic to $G$
  if $G$ is regular, (2) 
 $H$ is not isomorphic to $K_{k\delta(G)+1}$ if $G \simeq K_{\delta(G)+1}$, and (3)
 $H$ is not an odd cycle  
 if $G$ is isomorphic to $K_2$.
   Indeed,
   we show even more, by proving that if $G_1,\ldots,G_k$
 are  connected graphs with minimum degrees  $d_1,\ldots,d_k$, respectively, and $\Delta(H)=\sum_{i=1}^{k}d_k$,
 then there is a partition of vertices of $H$ to $V_1,\ldots,V_k$
such that each $H[V_i]$ is $G_i$-free and moreover one of  $V_i$s  can be chosen in a way that $H[V_i]$  is a maximum   $G_i$-free subset of $V(H)$ except  either $k=1$ and $H$ is  isomorphic to  $G_1$, each
$G_i$ is isomorphic to $K_{d_i+1}$ and $H$ is not isomorphic to $K_{\Delta(H)+1}$, or
     each $G_i$ is isomorphic to $K_{2}$ and $H$ is not an odd cycle.

%\KeyWords{Conditional Chromatic number, $G$-free coloring, the Brooks theorem} 
%\Subject{05C15}

\end{abstract}

\section{Introduction} 
In this paper, we are only concerned with simple graphs and we follow \cite{bondy} for terminology and notations not defined here. For a given graph $G$, we denote its vertex set, edge set, maximum degree, and minimum degree by $V(G)$, $E(G)$, $\Delta(G)$, and $\delta(G)$, respectively. For a vertex $v\in V(G)$, we use $\deg_G{(v)}$ (or simply $\deg{(v)}$) and $N_G(u)$ to denote the degree and the set of neighbors of $v$ in $G$, respectively. The subgraph of $G$ induced on $X \subseteq V(G)$ is denoted by $G[X]$. 

A  $k$-coloring of $G$ is an assignment of $k$ colors to its vertices so that no two adjacent vertices receive the same color.
The chromatic number of $G$,  denoted by $\chi(G)$,
is the minimum number $k$ for which  G has a $k$-coloring.
 It is known that for any graph $G$, we have $\chi(G)\leq \Delta(G)+1$. Brooks showed that  if a connected graph $G$  is neither an odd cycle nor a complete graph, then $\chi(G)\leq \Delta(G)$~\cite{Brooks}.

The conditional chromatic number $\chi(H,P)$ of $H$, with respect to a graphical property $P$, is the minimum number $k$ for which there 
is a partition of  $V(H)$  into  sets $V_1,\ldots, V_k$
 such that for each $1\leq i\leq k$, the induced subgraph $H[V_i]$ satisfies the property $P$.
 This generalization of graph coloring was introduced by Harary in 1985~\cite{MR778402}. In this sense, for an ordinary graph coloring, the subgraph  induced on each $V_i$ of the partition does not contain $K_2$.  As another special case, when $P$ is the property of being acyclic, $\chi(H,P)$ is called the vertex arboricity of $H$. In other words, the vertex arboricity of a graph $H$, denoted by $a(H)$, is the minimum number $k$ for which $V(H)$ can be decomposed into subsets $V_1,\ldots, V_k$ so that each subset induces an acyclic subgraph.  The vertex arboricity of graphs was first introduced by Chartrand, Kronk, and Wall in~\cite{Chartrand}. 
 %For more results see \cite{Chartrand,Chartrand1,Borodin,Bauer}, and \cite{Catlin,Catlin1}.
Also, it has been shown that for any arbitrary graph, say $H$, $a(H)\leq \lceil \frac{\Delta(H)+1}{2}\rceil$~\cite{Chartrand}, while a Brooks-type theorem was proved in~\cite{Kronk}. If $H$ is not a cycle or a complete graph of odd order, then we have $a(H)\leq \lceil \frac{\Delta}{2}\rceil$\cite{Kronk} and for a planar graph $H$, it has  been shown that  $a(H)\leq 3$ \cite{Chartrand,Hedet}. Moreover, for $k\in\{3,4,5,6\}$, and every planar graph $H$ with no subgraph isomorphism to $C_k$, we have $a(H)\leq 2$~\cite{ras} (for more results on arboricity  see e.g.\cite{Catlin,Catlin1,Chartrand,Chartrand1,Borodin,Bauer,Harary,Hedet,Kronk, ras}).

When $P$ is the property of {\it not containing a subgraph isomorphic to $G$}, we write $\chi_{_G}(H)$ instead of $\chi(G,P)$ which is called the $G$-free chromatic number, henceforth. In this regard, we say a graph $H$ has a $G$-free $k$-coloring if there is a map $c : V(H) \longrightarrow \{1,2,\ldots,k\}$ such that the subgraph induced on each one of the  color classes of $c$ is $G$-free. One can see that an ordinary  $k$-coloring  is a $K_2$-free coloring of a graph $H$  with $k$ colors. Also, for any graph $H$, one may show that 
$\chi_{_G}(H)\leq \lceil \frac{\chi(H)}{\chi(G)-1} \rceil.$

 In 1941 Brooks proved that  for a connected graph $H$, $\chi(H)\leq \Delta(H)$ when $H$ is neither an odd cycle nor a complete graph. 
 As an extension of Brooks' theorem, Catlin showed that 
  if $H$ is neither an odd cycle nor a complete graph, then $H$ has a proper $\Delta(H)$-coloring for which one of the color classes is a maximum independent set of $H$~\cite{Catlin}. Here, we prove an extension of Catlin's result for  partitioning  of the vertex set of a graph $H$ in a way that each class avoids having a  prescribed  subgraph. Clearly, in this way, we obtain a Brooks-Catlin-type theorem for the $G$-free chromatic number of a graph $H$ as follows.

 \begin{theorem}\label{1th}
  Let $k\geq 1$ be a positive integer. Assume that $G_1,\ldots,G_k$
 be  connected graphs with minimum degrees  $d_1,\ldots,d_k$, respectively, and $H$ be a connected graph with maximum degree
   $\Delta(H)$ where $\Delta(H)=\sum_{i=1}^{k}d_k$. Assume that $G_1,G_2,\ldots,G_k$,
   and $H$ satisfy the following conditions;
 \begin{itemize}
 \item If $k=1$, then $H$ is not isomorphic to  $G_1$.
 \item If  $G_i$ is isomorphic to $K_{d_i+1}$ for each $1\leq i\leq k$, then $H$ is not isomorphic to $K_{\Delta(H)+1}$.
  \item If  $G_i$ is isomorphic to $K_{2}$ for each $1\leq i\leq k$, then $H$ is neither an odd cycle nor a complete graph.
  \end{itemize}
Then, there is a partition of vertices of $H$ to $V_1,\ldots,V_k$
such that each $H[V_i]$ is $G_i$-free and moreover one of  $V_i$s  can be chosen in a way that $H[V_i]$  is a maximum induced  $G_i$-free subgraph in $H$. 
\end{theorem}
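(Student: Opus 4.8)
The plan is to reduce the statement to a purely degree-theoretic partition problem, to use the maximality of one part to lower the relevant maximum degree by exactly $\delta(G_1)$, and then to push a Brooks--Lovász type boundary analysis through, using the precise shape of the forbidden graphs only at the very end. The first and most useful observation is that the minimum-degree hypothesis makes a degree bound sufficient for $G_i$-freeness: if $W\subseteq V(H)$ satisfies $\Delta(H[W])\le d_i-1$, then $H[W]$ is $G_i$-free, since any copy of $G_i$ would contain a vertex of degree at least $\delta(G_i)=d_i$ inside $H[W]$. This reduction is lossy -- $G_i$-freeness is in general much weaker than the degree bound -- but it is exactly what is needed in the generic case, and the slack between the two notions is what will later resolve the boundary.

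Next I fix $V_1=M$ to be a set of maximum cardinality with $H[M]$ being $G_1$-free; this is the part that will realise the \emph{Catlin} conclusion. Maximality forces every $v\notin M$ to have many neighbours in $M$: since $H[M\cup\{v\}]$ contains a copy of $G_1$ while $H[M]$ does not, that copy uses $v$, so $v$ has at least $\delta(G_1)=d_1$ neighbours inside $M$. Hence $\deg_{H-M}(v)=\deg_H(v)-\deg_M(v)\le \Delta(H)-d_1=\sum_{i=2}^{k}d_i=:D$ for every $v\notin M$, so $\Delta(H-M)\le D$. If $k=1$ this already concludes the proof: there is no remaining part, and a degree count shows that a copy of $G_1$ in $H$ (which would have every vertex of degree $d_1=\Delta(H)$, hence all its neighbours inside the copy) would force $H\simeq G_1$ by connectivity, contradicting the hypothesis; thus $H$ is $G_1$-free and $M=V(H)$.

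Assume now $k\ge 2$. It remains to partition $H-M$ into $V_2,\dots,V_k$ with $\Delta((H-M)[V_i])\le d_i-1$, which by the first paragraph makes each $H[V_i]$ the desired $G_i$-free graph. Here I argue on the partition of $H-M$ minimising the potential $\Phi=\sum_{i=2}^{k}\frac{1}{d_i}\,e((H-M)[V_i])$. If some $v\in V_i$ has $\deg_{V_i}(v)\ge d_i$, then either there is $j\ne i$ with $\deg_{V_j}(v)\le d_j-1$, in which case moving $v$ to $V_j$ strictly decreases $\Phi$ (the change is at most $-1+\frac{d_j-1}{d_j}<0$), a contradiction; or, using $\sum_{j=2}^{k}\deg_{V_j}(v)=\deg_{H-M}(v)\le D=\sum_{j=2}^{k}d_j$, we are forced into the tight configuration $\deg_{V_j}(v)=d_j$ for all $j$ and $\deg_{H-M}(v)=D$, whence also $\deg_H(v)=\Delta(H)$ and $\deg_M(v)=d_1$ exactly. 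Away from this tight configuration the minimiser is already a valid partition and we are done.

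The tight configuration is the Brooks--Lovász boundary and is the technical heart of the argument. Restricting to a component $C$ of $H-M$ in which it occurs, every vertex of $C$ has degree exactly $D$ in $C$ and exactly $d_1$ neighbours in $M$, so a Brooks-type connectivity/rigidity analysis pins $C$ down to a highly symmetric graph -- a clique $K_{D+1}$, or, when all $d_i=1$, an odd cycle. At this point the slack noted in the first paragraph is spent: because $G_i$-freeness is weaker than $\Delta\le d_i-1$, a tight vertex can in fact be absorbed into some $V_i$ without creating a copy of $G_i$ \emph{unless} the local clique structure is literally a forbidden $K_{d_i+1}$, i.e. unless $G_i\simeq K_{d_i+1}$ for every $i$; tracing the rigidity through $M$ then forces $H\simeq K_{\Delta(H)+1}$. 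Likewise the odd-cycle obstruction only survives when every $G_i\simeq K_2$, forcing $H$ to be an odd cycle or a complete graph. All three outcomes are excluded by hypothesis, so the minimiser can always be repaired, completing the proof. Verifying rigorously that a surviving tight component -- together with the equality $\deg_M(v)=d_1$ coming from the maximality of $M$ -- really does collapse $H$ onto one of the three excluded graphs, and that in every other case a tight vertex can be re-routed (possibly via an exchange along $M$ that preserves $V_1$ as a maximum $G_1$-free set), is the step I expect to be the main obstacle.
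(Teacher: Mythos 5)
Your central ``rigidity'' step is false, and it is false for a reason the paper itself records. After fixing a maximum $G_1$-free set $M$, you reduce the problem to partitioning $H-M$ into classes with $\Delta\bigl((H-M)[V_i]\bigr)\le d_i-1$, and you claim that a tight component surviving the potential-minimisation must be a clique $K_{D+1}$ or an odd cycle. At the tight threshold $\sum_{i\ge 2}d_i=\Delta(H-M)$ this Brooks-type dichotomy does not hold: the paper's remark after Theorem~\ref{LV} gives the counterexample $K_{3,3,3}$, which is $6$-regular, connected, neither complete nor a cycle, yet admits no partition into two parts each inducing maximum degree at most $2$ (take $d_2=d_3=3$). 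This configuration genuinely arises in your setting: let $G_1=K_2$, $G_2=G_3=K_{3,3}$, and let $H$ be $K_{3,3,3}$ (with parts $A,B,C$) plus a perfect matching joining its nine vertices to a nine-vertex independent set $I$, so that $\Delta(H)=7=1+3+3$ and the hypotheses of Theorem~\ref{1th} hold. Then $I$ is a legitimate choice of maximum $G_1$-free set $M$, $H-M=K_{3,3,3}$, and every vertex of $H-M$ is tight (degree $6$ in $H-M$, one neighbour in $M$); your minimiser of $\Phi$ can never be repaired into a degree-bounded partition, and the component is not one of your two rigid shapes, so the argument stalls. Meanwhile the theorem's conclusion is true here: $V_2=A\cup\{b_1\}$ and $V_3=\{b_2,b_3\}\cup C$ are both $K_{3,3}$-free since $K_{3,3}$ needs six vertices. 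The moral is that the slack between ``$G_i$-free'' and ``maximum degree at most $d_i-1$'' cannot be saved for the very end; it must be spent inside the structural analysis. This is exactly why Bollob\'as--Manvel and Matamala formulate their results with degeneracy rather than with maximum degree, and why the paper's Lemma~\ref{le0} never poses the degree-bounded partition problem at all.

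The same example exposes the second gap, the one you flag yourself as ``the main obstacle'': which maximum $G_1$-free set you pick matters, and your outline provides no mechanism for picking a good one or for exchanging vertices across the cut between $M$ and $H-M$. In the example above, the alternative maximum independent set $M'=A\cup(I\setminus I_A)$ (where $I_A$ is the triple matched to $A$) makes the rest of the proof trivial, since $H-M'$ has maximum degree $3$. The paper secures such good choices by imposing two secondary optimisation criteria on $S$: among maximum $G$-free sets, minimise the number of $(\Delta(H)-d)$-regular connected subgraphs of $H\setminus S$, and subject to that minimise the number of components of $H[S]$. Every exchange argument in Lemma~\ref{le0} (uniqueness of the copy $G_v$ in $H[S\cup\{v\}]$, the fact that $G_v$ is a $d$-regular component, the chain of swaps $S_0,S_1,\dots,S_\ell$) leans on these criteria, and the conclusion---that the mere existence of a $(\Delta(H)-d)$-regular connected subgraph in $H\setminus S$ forces $H$ to be exceptional---is what makes the induction on $k$ go through. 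So what you defer as a final verification is in fact the entire content of the paper's proof, and the step you do spell out (clique/odd-cycle rigidity for degree-bounded partitions) is the one that is provably wrong.
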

 In Theorem~\ref{1th}, if we take 
 $G_i=K_2$ for $1\leq i\leq k$, then we get Catlin's result. Also, if for a given graph $G$ and for $1\leq i\leq k$  we choose  $G_i=G$, we obtain the following Brooks-Catlin-type result for $G$-free coloring of graphs.
 \begin{corollary}\label{2th}
  Let $G$ be a connected graph with minimum degree $\delta(G)\geq 1$. Also, assume that $H$ is a connected graph with maximum degree
   $\Delta(H)$ while $H$ satisfies the following conditions;
 \begin{itemize}
 \item If $G$ is regular, then $H\ncong G$.
 \item If $G$ is isomorphic to $K_{\delta(G)+1}$, then $H$ is not $K_{k\delta(G)+1}$.
 \item If $G$ is isomorphic to $K_2$, then $H$ is neither an odd cycle nor a complete graph.
  \end{itemize}
Then, there is a $G$-free $\lceil \frac{\Delta(H)}{\delta(G)}\rceil$-coloring of $H$ such that one of whose color classes is a maximum induced  $G$-free subgraph in $H$. In particular,
\[\chi_{_G}(H)\leq\lceil \frac{\Delta(H)}{\delta(G)}\rceil.\]
\end{corollary}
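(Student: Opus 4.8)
The plan is to \emph{derive} Corollary~\ref{2th} from Theorem~\ref{1th} by a suitable specialization of the family $G_1,\dots,G_k$ together with a case analysis that matches the three exceptional hypotheses. Set $\delta=\delta(G)$ and $k=\lceil \Delta(H)/\delta\rceil$. The obstruction to applying Theorem~\ref{1th} directly is that the theorem requires the \emph{exact} equation $\Delta(H)=\sum_{i=1}^{k}d_i$, whereas here we only know $k\delta\geq\Delta(H)$, i.e.\ we may have strict inequality $k\delta>\Delta(H)$. So first I would reconcile these: writing $\Delta(H)=q\delta+r$ with $0\leq r<\delta$, the value of $k$ is $q$ when $r=0$ and $q+1$ when $r>0$. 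I would choose the family by taking each $G_i=G$ when $r=0$ (so every $d_i=\delta$ and $\sum d_i=k\delta=\Delta(H)$), and when $r>0$ replace one member, say $G_k$, by a connected graph $G'$ with $\delta(G')=r$ (for instance $K_{r+1}$, or a path/star realizing minimum degree $r$) so that $\sum_{i=1}^{k}d_i=(k-1)\delta+r=\Delta(H)$. Since any $G$-free set is automatically $G'$-free whenever $G'\subseteq G$ or more simply whenever avoiding $G$ is no harder, I would argue that a partition witnessing Theorem~\ref{1th} for this mixed family is in particular a $G$-free coloring: the class constrained only to be $G_k$-free is certainly $G$-free provided the smaller forbidden subgraph $G'$ is chosen so that $G'$-free implies $G$-free, which holds when $G$ contains a copy of $G'$. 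I would verify this containment for the concrete choice (e.g.\ $K_{r+1}\subseteq K_{\delta+1}$, and more generally a minimum-degree-$r$ subgraph inside $G$), so that the resulting partition is a genuine $G$-free $k$-coloring.

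Next I would check that the three exceptional hypotheses of Theorem~\ref{1th} are implied by the three hypotheses of the corollary under this choice of family. The first theorem's exception ``$k=1$ and $H\cong G_1$'' translates, in the uniform case $G_i=G$, to the situation where $\lceil\Delta(H)/\delta\rceil=1$, i.e.\ $\Delta(H)=\delta$, which forces $G$ to be $\delta$-regular (hence regular) and $H\cong G$; this is exactly excluded by the corollary's first bullet ``if $G$ is regular then $H\ncong G$.'' The second exception ``each $G_i\cong K_{d_i+1}$ and $H\cong K_{\Delta(H)+1}$'' becomes, when $G\cong K_{\delta+1}$, the condition $H\cong K_{k\delta+1}$; here I must be careful that $K_{\Delta(H)+1}$ and $K_{k\delta+1}$ agree, which they do precisely when $r=0$ so that $\Delta(H)=k\delta$, matching the corollary's second bullet ``$H$ is not $K_{k\delta+1}$.'' The third exception ``each $G_i\cong K_2$ and $H$ is an odd cycle'' corresponds to $G\cong K_2$ (so $\delta=1$, $k=\Delta(H)$), excluded by the corollary's third bullet. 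The complete-graph clause in the third bullet I would note is needed to cover the $K_2$-case consistency with the second bullet when $\delta=1$.

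The main obstacle I anticipate is the bookkeeping around strict inequality $k\delta>\Delta(H)$ and ensuring the exceptional cases align \emph{exactly}. In particular the corollary's hypotheses are phrased in terms of $K_{k\delta+1}$ rather than $K_{\Delta(H)+1}$, and one must confirm these coincide in the only regime where the exception can actually arise (the regular/complete cases force $r=0$, so $\Delta(H)=k\delta$). I would handle the non-uniform remainder case $r>0$ by observing that when $r>0$ the graph $G$ cannot be $K_{\delta+1}$-with-$H\cong K_{k\delta+1}$ in a way that triggers the forbidden case, because the mixed family is no longer ``each $G_i\cong K_{d_i+1}$'' in the uniform sense demanded by the exception; this lets the conclusion go through without exception whenever $r>0$.

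Having established a $G$-free $k$-coloring with one class a maximum $G_i$-free set, the final step is to upgrade ``maximum $G_i$-free'' to ``maximum $G$-free.'' The safest route is to arrange the distinguished class (the one Theorem~\ref{1th} guarantees is a maximum induced $G_i$-free subgraph) to be one of the classes with $G_i=G$, which is always possible since at most one member of the family is the substitute $G'$ and $k\geq 2$ whenever a substitution occurs; in the sole case $k=1$ no substitution happens and $G_1=G$ automatically. Then the distinguished class is a maximum induced $G$-free subgraph of $H$, giving both conclusions of the corollary. The bound $\chi_{_G}(H)\leq\lceil\Delta(H)/\delta(G)\rceil$ is then immediate from the existence of a $G$-free $k$-coloring with $k=\lceil\Delta(H)/\delta\rceil$ colors.
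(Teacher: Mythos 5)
Your overall route is the paper's: the authors obtain Corollary~\ref{2th} simply by specializing Theorem~\ref{1th} to $G_1=\cdots=G_k=G$, and your padding device for the case $\delta(G)\nmid\Delta(H)$ is an attempt to make explicit a step the paper leaves entirely implicit (Theorem~\ref{1th} does require the exact equation $\Delta(H)=\sum d_i$, so noticing the mismatch is to your credit). However, your execution has a genuine flaw, in two places. First, your concrete substitutes are wrong: $K_{r+1}$ need not be a subgraph of $G$ at all --- take $G=K_{\delta,\delta}$ and $r=2$; then $G$ is triangle-free, so a $K_3$-free class can perfectly well contain a copy of $G$, and ``$G'$-free implies $G$-free'' fails --- while a path or star has minimum degree $1$, not $r$. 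Your fallback, ``a minimum-degree-$r$ subgraph inside $G$,'' is the right object, but you never show it exists; it does (while the minimum degree exceeds $r\geq 1$ the graph contains a cycle, deleting a cycle edge preserves connectivity, and the minimum degree drops in unit steps, so it attains the value $r$ exactly), and this needs to be said.

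The more serious error is your dismissal of the second exception when $r>0$: you misread Theorem~\ref{1th}. Its hypothesis ``$G_i\cong K_{d_i+1}$ for each $i$'' does \emph{not} demand equal orders, so with $G\cong K_{\delta+1}$ and your stated choice $G_k=K_{r+1}$ the mixed family consists entirely of complete graphs $K_{d_i+1}$, and Theorem~\ref{1th} then requires $H\ncong K_{\Delta(H)+1}$ --- a condition the corollary's hypotheses do \emph{not} supply when $r>0$, since they only forbid $H\cong K_{k\delta+1}$ and $\Delta(H)+1<k\delta+1$. Concretely: $G=K_3$, $H=K_6$, so $\delta=2$, $\Delta(H)=5$, $k=3$, $r=1$, family $(K_3,K_3,K_2)$; the theorem's exception is triggered by $H=K_{\Delta(H)+1}$, so your derivation stalls, even though the corollary's conclusion is true here (partition $V(K_6)$ into three pairs, each $K_3$-free, one of which is a maximum $G$-free set). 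So the claim that ``the mixed family is no longer each $G_i\cong K_{d_i+1}$ in the uniform sense'' is simply false as stated. The repair is easy: take $G'$ to be a connected \emph{spanning} subgraph of $G$ with $\delta(G')=r$ (which the deletion argument above provides); then $|V(G')|=|V(G)|\geq \delta+1>r+1$, so $G'\ncong K_{r+1}$, the all-complete exception can never arise in the mixed case, and $G'\subseteq G$ still yields ``$G'$-free implies $G$-free.'' Finally, note that when $\Delta(H)<\delta(G)$ (your $k=1$, $r>0$ regime) Theorem~\ref{1th} cannot be invoked at all since its degree equation is unsatisfiable; there the corollary is trivial because $H$ itself is $G$-free, and your remark that ``no substitution happens when $k=1$'' should be replaced by this observation. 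With these corrections your argument goes through and faithfully completes the paper's one-line derivation.
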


An analogue to Catlin's result for vertex arboricity is due to Catlin and Lai~\cite{Catlin1}.
They proved the following interesting theorem for the vertex arboricity of graphs.
 \begin{alphtheorem}\label{Catlin-Lai}{\rm \cite{Catlin1}}
Assume that $H$ is neither a cycle nor a complete graph of odd order. 
\begin{itemize}
\item If $\Delta(H)$ is even, then  there is a coloring with $ \frac{\Delta(H)}{2}$ colors such that
each color class induces an acyclic subgraph and one of those  is a maximum induced acyclic subgraph in $H$.
\item If $\Delta(H)$ is odd, then  there is a coloring with $ {\lceil \frac{\Delta(H)}{2}\rceil}$ colors such that
each color class induces an acyclic subgraph. Moreover, this coloring
 can be chosen to satisfy one  of the following properties:
\begin{itemize}
\item[(a)] one  color class is an independent set and one  color class  is a maximum induced acyclic subgraph in $H$. 
\item[(b)] one  color class is a maximum independent set in $H$. 
 \end{itemize}
 \end{itemize}
 \end{alphtheorem}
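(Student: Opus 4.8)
My plan is to prove the partition statement by induction on $k$, and to obtain the extra conclusion about a maximum $G_i$-free class essentially for free, by always taking the first class to be a maximum $G_1$-free set. The engine of the whole argument is the following observation. Let $V_1\subseteq V(H)$ be a set of maximum size for which $H[V_1]$ is $G_1$-free. For any vertex $v\notin V_1$, maximality forces $H[V_1\cup\{v\}]$ to contain a copy of $G_1$; this copy must pass through $v$, and since every vertex of $G_1$ has degree at least $\delta(G_1)=d_1$, the vertex $v$ has at least $d_1$ neighbours in $V_1$. Hence every vertex of $H':=H-V_1$ satisfies $\deg_{H'}(v)\le\Delta(H)-d_1=\sum_{i=2}^{k}d_i$, so $\Delta(H')\le\sum_{i=2}^{k}d_i$. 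Thus deleting a maximum $G_1$-free set lands the remainder exactly at the analogous threshold for the shorter list $G_2,\dots,G_k$, which is what drives the induction. Since $V_1$ is by construction a maximum induced $G_1$-free subgraph of $H$, it is this class that witnesses the required maximum class; the recursive calls need only produce \emph{some} valid partition of $H'$.

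It then remains to partition $H'$ into $V_2,\dots,V_k$ with every $H'[V_i]$ being $G_i$-free, and I would treat each connected component $C$ of $H'$ separately. The base case $k=1$ is the statement that a connected graph $F$ with $\Delta(F)=\delta(G_1)=d_1$ is $G_1$-free unless $F\cong G_1$: a copy of $G_1$ in $F$ would force each of its vertices to spend all of its edges inside the copy (a vertex of degree exceeding $d_1$ in $G_1$ cannot even be accommodated, since $\Delta(F)=d_1$), so by connectivity $F$ equals the copy, giving $F\cong G_1$, the excluded case. For the inductive step I would apply the induction hypothesis to $(C;G_2,\dots,G_k)$. A component $C$ with $\Delta(C)<\sum_{i=2}^{k}d_i$ is unproblematic: the same deletion of a maximum $G_2$-free set keeps the maximum degree strictly below the running partial sum at every stage, so the threshold is never met, no exceptional configuration can arise, and the induction proceeds unobstructed. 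The delicate components are those with $\Delta(C)=\sum_{i=2}^{k}d_i$, sitting exactly at the reduced threshold.

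The crux, and what I expect to be the main obstacle, is a component $C$ of $H'$ that is itself exceptional for the shorter list, namely $C\cong G_2$ when a single graph remains, or $C\cong K_{\sum_{i\ge2}d_i+1}$ when $G_2,\dots,G_k$ are all complete, or $C$ an odd cycle when they are all $K_2$. Each such $C$ is regular of degree $\Delta':=\sum_{i\ge2}d_i$; combined with the degree bound above and with $\deg_H\le\Delta(H)=d_1+\Delta'$, this forces every vertex of $C$ to have \emph{exactly} $d_1$ neighbours in $V_1$ (and full degree $\Delta(H)$ in $H$). I would exploit this rigidity by a swap: pick $w\in C$ and a neighbour $u\in N(w)\cap V_1$, and replace $V_1$ by $V_1':=(V_1\setminus\{u\})\cup\{w\}$. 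After deleting $u$ the vertex $w$ has only $d_1-1$ neighbours in $V_1'$, hence cannot lie in any copy of $G_1$, so $H[V_1']$ is again $G_1$-free and of the same (maximum) size; moreover, being maximum, $V_1'$ again makes every outside vertex have at least $d_1$ neighbours in it. The point is that the remnant $C-w$ of the exceptional component is no longer exceptional (for instance $K_{\Delta'+1}-w=K_{\Delta'}$ has dropped below the threshold, an odd cycle becomes a path, and a copy of $G_2$ minus a vertex can no longer contain $G_2$), so the swap defuses $C$. To make this rigorous I would fix, among all maximum $G_1$-free sets, one that is extremal for a suitable monovariant (e.g.\ minimising the number of vertices lying in tight exceptional components of $H-V_1$) and show that any surviving exceptional component would admit an improving swap. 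The hypotheses on $H$ enter exactly here: the only configurations in which every swap merely reproduces an exceptional component are the globally exceptional graphs, i.e.\ $H\cong K_{\Delta(H)+1}$, $H$ an odd cycle, or (for $k=1$) $H\cong G_1$ — one checks, for example, that a maximum $K_{d_1+1}$-free set in $K_{\Delta(H)+1}$ leaves the clique $K_{\Delta'+1}$ no matter how it is chosen. Verifying that the monovariant genuinely decreases and that these three families are the sole obstructions is the technical heart of the argument; in the all-$K_2$ specialisation this swap is precisely the endgame of Brooks' theorem, which may alternatively be invoked directly.
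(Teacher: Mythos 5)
The statement you were asked to prove is Theorem~\ref{Catlin-Lai}, the Catlin--Lai theorem on vertex arboricity, but what you have sketched is a proof of Theorem~\ref{1th}, the paper's partition theorem in which each class avoids a \emph{single} prescribed connected graph $G_i$. These are not the same, and the gap between them is not cosmetic: acyclicity cannot be encoded as ``$G$-free'' for any single graph $G$. A class inducing a forest must simultaneously avoid every cycle $C_3,C_4,C_5,\dots$ (equivalently, every connected graph of minimum degree at least $2$); if $G$ contains a cycle, then some longer cycle is $G$-free but not acyclic, and if $G$ is a tree, then $G$-freeness is strictly stronger than acyclicity. So your induction, even if completed, never produces color classes inducing forests, and a fortiori never produces the refined conclusions of Theorem~\ref{Catlin-Lai}: a maximum induced acyclic class when $\Delta(H)$ is even, and the alternative (a)/(b) involving independent sets when $\Delta(H)$ is odd. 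This is exactly why the paper passes through Theorem~\ref{sec-thm}, the family version, in which each class avoids all members of a family $\G_i$ and $d_i=\delta(\G_i)$: acyclic means $\G^{\geq 2}$-free, where $\G^{\geq 2}$ is the family of all connected graphs of minimum degree at least $2$. Your engine (maximum $G_1$-free first class, the degree bound $\Delta(H\setminus V_1)\le\Delta(H)-d_1$, swaps governed by a monovariant) is essentially the paper's Lemma~\ref{le0} and does survive the passage to families, since it only ever uses the minimum degree $d_1$; but you neither state that generalization nor carry out the instantiation that actually yields Theorem~\ref{Catlin-Lai}. For $\Delta(H)$ even one takes $k=\Delta(H)/2$ and all $\G_i=\G^{\geq 2}$; for $\Delta(H)$ odd one takes $k-1$ families equal to $\G^{\geq 2}$ and one equal to $\{K_2\}$, and the freedom to choose \emph{which} class is made maximum is what gives alternative (a) (make an acyclic class a maximum induced forest; the $\{K_2\}$-free class is automatically an independent set) versus alternative (b) (make the $\{K_2\}$-free class a maximum independent set).

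Two further points. First, even read as a proof of Theorem~\ref{1th}, your sketch defers precisely the hard part: ``verifying that the monovariant genuinely decreases and that these three families are the sole obstructions'' is the bulk of the paper's Lemma~\ref{le0}, which requires three nested extremal choices (maximum $|S|$, minimum number of $(\Delta(H)-d)$-regular connected subgraphs of $H\setminus S$, minimum number of components of $H[S]$) and a delicate chain argument through regular subgraphs $H_0,H_1,\dots,H_\ell$; a one-sentence appeal to ``an improving swap'' does not substitute for this. Second, the instantiation for odd $\Delta(H)$ is not free of charge: in that case $K_{d_i+1}\in\G_i$ for every $i$, so the hypothesis of Theorem~\ref{sec-thm} excludes $H\cong K_{\Delta(H)+1}$, which for odd $\Delta(H)$ is a complete graph of \emph{even} order and hence is not excluded by the hypothesis of Theorem~\ref{Catlin-Lai} as stated. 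This boundary case is genuinely problematic (for $H=K_4$ the only partition into two acyclic classes is into two edges, so neither (a) nor (b) can hold), so any honest derivation must treat even-order complete graphs separately or adjust the statement accordingly.
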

 
 Let $\G$ be a family of graphs. For a graph $H$, a subset $W$  of   $V(H)$ is said to be $\G$-free if $H[W]$ does 
 not contain any one of the  members of $\G$. Therefore, a $\G$-free coloring of graph may be defined similarly. For example, if the family $\G$ consists of all connected graphs with minimum degree at least $2$, 
 then the $\G$-free chromatic number of a graph $H$ is equal to  the vertex arboricity of $H$.
  We define the minimum degree of $\G$ by  $\delta(\G)=\min\{\delta(G)| G\in\G\}$. 
  In this setup, it is straight forward to a generalization of Theorem~\ref{1th} as follows.
   \begin{theorem}\label{sec-thm}
  Let $k\geq 1$ be a positive integer. Assume that $\G_1,\ldots,\G_k$
 be $k$ families of connected graphs with minimum degrees  $d_1,\ldots,d_k$, respectively. Also, assume that $H$ is a connected graph with maximum degree
   $\Delta(H)$ where $\Delta(H)=\sum_{i=1}^{k}d_k$.
   Let $\G_1,\G_2,\ldots,\G_k$,
   and $H$ satisfy the following conditions;
 \begin{itemize}
 \item If $k=1$, then $H\not\in \G_1$.
 \item If  $K_{d_i+1}\in \G_i$ for each $1\leq i\leq k$, then $H$ is not isomorphic to $K_{\Delta(H)+1}$.
  \item If  $K_{2}\in \G_i$ for each $1\leq i\leq k$, then $H$ is neither an odd cycle nor a complete graph.
  \end{itemize}
Then, there is a partition of vertices of $H$ to $V_1,\ldots,V_k$
such that each $H[V_i]$ is $\G_i$-free and moreover one of  $V_i$s  can be chosen in a way that $H[V_i]$  is a maximum induced  $\G_i$-free subgraph in $H$. 
\end{theorem}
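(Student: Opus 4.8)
My plan is to show that Theorem~\ref{sec-thm} follows from the \emph{same} argument that establishes Theorem~\ref{1th}, because every place where that argument invokes the graph $G_i$ does so only through its minimum degree $d_i=\delta(G_i)$, and the two structural facts driving the proof remain valid when the single graph $G_i$ is replaced by a family $\G_i$ with $\delta(\G_i)=d_i$. The first fact is: \textbf{(i)} if $\Delta(H[V_i])\le d_i-1$ then $H[V_i]$ is $\G_i$-free, since every member $G\in\G_i$ has $\delta(G)\ge d_i$, so any embedded copy of $G$ would force a vertex of $H[V_i]$ of degree at least $d_i$, contradicting the bound. The second is: \textbf{(ii)} if $W\subseteq V(H)$ is a \emph{maximum} $\G_i$-free set and $v\notin W$, then $|N(v)\cap W|\ge d_i$; indeed, by maximality $H[W\cup\{v\}]$ contains a copy of some $G\in\G_i$, and as $H[W]$ is $\G_i$-free this copy uses $v$, whence $|N(v)\cap W|\ge\deg_{\text{copy}}(v)\ge\delta(G)\ge d_i$. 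These are exactly the two statements used for single graphs in Theorem~\ref{1th} (there with $d_i=\delta(G_i)$), so they transfer verbatim.

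With (i) and (ii) in hand I would run the proof of Theorem~\ref{1th} unchanged. Designate the index $j$ whose class is to be maximal, let $W$ be a maximum $\G_j$-free subset of $V(H)$, and set $V_j=W$; this class is $\G_j$-free and maximum by construction. By (ii) every vertex of $H'\isdef H-W$ has at least $d_j$ neighbours in $W$, so $\Delta(H')\le\Delta(H)-d_j=\sum_{i\ne j}d_i$. It then suffices to partition $H'$ into classes $\{V_i\}_{i\ne j}$ with $\Delta(H'[V_i])\le d_i-1$, because by (i) such classes are automatically $\G_i$-free. When $\Delta(H')<\sum_{i\ne j}d_i$ this is a non-boundary degree-constrained partition of Lovász type and exists unconditionally; the delicate case is the boundary $\Delta(H')=\sum_{i\ne j}d_i$, handled componentwise by the Brooks-type partition engine of Theorem~\ref{1th}, exactly as for single graphs.

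The exceptional families listed in Theorem~\ref{sec-thm} match those of Theorem~\ref{1th} because the obstructions are detected by \emph{membership}: the complete-graph exception is triggered precisely when $K_{d_i+1}\in\G_i$ (so a clique $K_{d_i+1}$ is an actual forbidden subgraph), and the independence/Brooks exception precisely when $K_2\in\G_i$, which forces $d_i=1$ and collapses $\G_i$-freeness to independence. For the base case $k=1$ one checks that, since $\Delta(H)=d_1$, a connected $H$ contains a member of $\G_1$ as a subgraph if and only if $H$ itself is isomorphic to such a member: any embedded copy has every image vertex of $H$-degree at least $d_1=\Delta(H)$, hence exactly $d_1$, with all incident $H$-edges lying inside the copy; the copy is therefore a full component and, by connectivity of $H$, all of $H$. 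Thus $H$ is $\G_1$-free iff $H\notin\G_1$, matching the stated hypothesis.

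The step I expect to be the main obstacle is the boundary case of the partition of $H'$, namely reproducing the Brooks--Catlin recolouring when some component of $H-W$ meets its degree bound with equality and is itself an extremal graph (a clique of the critical order, or an odd cycle when all $d_i$ collapse to $1$). There the naive partition fails on that component, and one must perturb the choice of the maximal class $W$ — trading a vertex of $W$ for vertices of the offending component — while preserving both the maximality of $W$ and the degree bound everywhere else. This is the one place where the proof is more than bookkeeping; but since the recolouring in Theorem~\ref{1th} only ever appeals to the degree facts (i) and (ii), it carries over word for word, which is what makes the passage to families \emph{straightforward} rather than merely analogous.
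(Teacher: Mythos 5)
Your preliminary facts (i) and (ii) are correct for families, your base case $k=1$ is right, and off the boundary your reduction does work: when $\Delta(H')<\sum_{i\neq j}d_i$, Theorem~\ref{LV} supplies the degree-bounded partition. The genuine gap is the boundary case $\Delta(H')=\sum_{i\neq j}d_i$, which is where the entire content of the theorem lives. You have replaced the requirement that $H'[V_i]$ be $\G_i$-free by the strictly stronger requirement $\Delta(H'[V_i])\le d_i-1$, and on the boundary a partition with those maximum-degree bounds need not exist at all, even for graphs that are neither critical cliques nor odd cycles. The paper warns of exactly this after Theorem~\ref{LV}, citing \cite{Bollob}; a concrete witness is $\overline{C_7}$, the complement of the $7$-cycle: it is $4$-regular with clique number $3$, so with $d_i=d_{i'}=2$ it sits exactly on the boundary, yet any subset inducing maximum degree at most $1$ in $\overline{C_7}$ has at most $3$ vertices, so two such subsets cannot cover its $7$ vertices. (It does admit a partition into two sets each inducing a triangle-free, indeed $1$-degenerate, subgraph --- take two arcs of the cycle --- which shows how much weaker $\G_i$-freeness is than your degree bound.) This is also why Lemma~\ref{lem1} and Theorem~\ref{thm:mat}, which already assume $\omega(H)\le\Delta(H)$ and $\Delta(H)\ge 3$, only guarantee $\Delta(H[V_i])\le d_i$ together with $(d_i-1)$-degeneracy, never $d_i-1$. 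Consequently your sentence that the boundary case is ``handled componentwise by the Brooks-type partition engine of Theorem~\ref{1th}'' cannot be repaired: that engine produces $\G_i$-free classes (one of them a maximum $\G_i$-free set), not classes of bounded maximum degree, and no argument can produce what you ask for, because it does not exist in general.

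The failure traces back to your premise that the proof of Theorem~\ref{1th} ``only ever appeals to the degree facts (i) and (ii).'' It does not. Its core, Lemma~\ref{le0}, manipulates actual copies of $G$: the uniqueness and $d$-regularity of the copy $G_v$ in $H[S\cup\{v\}]$ (Claim~\ref{c6}), the fact that $G_v$ is a connected component of $H[S\cup\{v\}]$ (Claim~\ref{c7}), choices of non-cut vertices of $G_v$, and the auxiliary minimizations of the number of $(\Delta(H)-d)$-regular connected subgraphs of $H\setminus S$ and of the number of components of $H[S]$. The generalization the paper intends for Theorem~\ref{sec-thm} is to rerun these swap arguments with ``copy of $G$'' replaced by ``copy of some member of $\G_j$,'' checking that every member-copy arising in the argument has minimum degree exactly $d_j$ (members of larger minimum degree cannot occur, by the degree count of part (a)), so that the uniqueness, regularity, and connectivity steps all survive; that verification is the actual work behind the word ``straightforward.'' Your proposal never engages with it, and the one place where it would be needed --- your boundary case --- is precisely where your argument is a placeholder. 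What is salvageable is the frame: fix a maximum $\G_j$-free class, bound $\Delta(H')$, and invoke Theorem~\ref{LV} off the boundary; but on the boundary you must prove the family analogue of Lemma~\ref{le0} with $\G_j$-freeness (or $(d_j-1)$-degeneracy) as the target, not cite Theorem~\ref{1th} as a black box for a degree-bounded partition it never provides.
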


A graph $H$ is said to be $p$-degenerate if every subgraph of $H$ has a vertex of degree at most $p$.
 Let the  family $\G^{^{\geq p}}$ consist of all connected graphs with minimum degree at least $p$. 
 One may show that  being $p$-degenerate  is equivalent to not  containing any subgraph isomorphic to any one of the members of $\G^{^{\geq p+1}}$. Therefore, the Catlin-Lai theorem and  the next theorem due to Matmala are direct consequences of Theorem~\ref{sec-thm}.
\begin{alphtheorem}\label{thm:mat}{\rm\cite{MR2327961}}
Let $H$ be a graph with maximum degree $\Delta(H)\geq 3$ and $\omega(H)\leq \Delta(H)$. If $\Delta(H)= d_1+d_2$, then the vertices of $H$ can be partitioned into two sets $V_1$ and $V_2$ such that  $H[V_1]$ is a maximum $(d_1-1)$-degenerate induced subgraph and $H[V_2]$ is $(d_2-1)$-degenerate.  
\end{alphtheorem}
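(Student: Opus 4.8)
The plan is to follow the proof of Theorem~\ref{1th} almost verbatim, after isolating the one place where it uses that each $G_i$ is a single graph and replacing it by a statement that depends on the family $\G_i$ only through its minimum degree $d_i=\delta(\G_i)$. That statement is the following \emph{degree lemma}: if $W\subseteq V(H)$ satisfies $\Delta(H[W])\le d_i-1$, then $H[W]$ is $\G_i$-free. Indeed, each $G\in\G_i$ is connected with $\delta(G)\ge d_i$, so any copy of $G$ inside $H[W]$ would contain a vertex of degree at least $d_i$ in $H[W]$, contradicting the bound. Thus a class built so as to satisfy $\Delta(H[V_i])\le d_i-1$ is $\G_i$-free for \emph{every} family with $\delta(\G_i)=d_i$; this degree bound is exactly the certificate of $G_i$-freeness produced for the non-distinguished classes in the proof of Theorem~\ref{1th}, and it now serves, unchanged, for families.

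Next I fix the index $j$ whose class is to be a maximum induced $\G_j$-free subgraph, take $S$ to be such a maximum subset, and set $V_j=S$ (which is $\G_j$-free by construction). The essential point is that maximality together with the minimum-degree hypothesis forces $\Delta(H-S)\le\sum_{i\ne j}d_i$. Indeed, for each $v\notin S$ the set $S\cup\{v\}$ is not $\G_j$-free by maximality, so $H[S\cup\{v\}]$ contains a member $G\in\G_j$, and this copy must use $v$ since $H[S]$ is $\G_j$-free; as $\deg_G(v)\ge\delta(G)\ge d_j$, the vertex $v$ has at least $d_j$ neighbours inside $S$, hence at most $\Delta(H)-d_j=\sum_{i\ne j}d_i$ neighbours in $H-S$. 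This is the exact analogue of Catlin's inequality $\Delta(H-I)\le\Delta(H)-1$ for a maximum independent set $I$, now driven by the minimum degree $d_j$ in place of independence.

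It remains to partition the residual graph $H-S$, of maximum degree at most $\sum_{i\ne j}d_i$, into classes $V_i$ $(i\ne j)$ meeting the degree bounds $\Delta(H[V_i])\le d_i-1$; by the degree lemma each such class is then $\G_i$-free, completing the partition. Because $\sum_{i\ne j}(d_i-1)=\bigl(\sum_{i\ne j}d_i\bigr)-(k-1)$ falls exactly one short of the Lov\'asz threshold, this is the tight degree-constrained partition problem, the analogue of the equality case of Brooks' theorem. I would handle it by the same extremal device used for Theorem~\ref{1th}: among all partitions of the remainder minimising the number of edges lying inside classes, show that a vertex exceeding its degree bound can be recoloured to a class where it has few neighbours, strictly decreasing the count, unless the local structure is rigid.

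The main obstacle is precisely this tight/rigid analysis, namely proving that the degree bounds can be met \emph{unless} $H$ is one of the configurations excluded by the three hypotheses; and then checking that these exceptions transfer from single graphs to families. When $K_{d_i+1}\in\G_i$, the densest minimal obstruction of minimum degree $d_i$ already lies in the family, so the rigid case is governed by $K_{\Delta(H)+1}$ precisely as when $G_i\simeq K_{d_i+1}$; and when $K_2\in\G_i$ for all $i$ every class must be independent, so the statement collapses to Catlin's theorem with its odd-cycle and complete-graph exceptions. Since the obstruction analysis only ever sees the forbidden complete graph $K_{d_i+1}$ (respectively $K_2$) together with the parameter $d_i$, every case distinction in the proof of Theorem~\ref{1th} applies without change, which is the sense in which the generalization to families is straightforward.
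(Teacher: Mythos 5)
Your overall route --- prove the family version (Theorem~\ref{sec-thm}) and regard the present statement as the instance $k=2$, $\G_i=\G^{^{\geq d_i}}$ --- is the same route the paper takes, and your first two steps (the degree lemma, and the bound $\Delta(H\setminus S)\le \Delta(H)-d_j$ coming from maximality of $S$) are correct and correspond to Lemma~\ref{le0}(a). The genuine gap is in your handling of the tight case, and it is fatal to the argument as proposed: you fix $S$ to be an \emph{arbitrary} maximum $\G_j$-free set and then try to repair matters by recolouring vertices inside $H\setminus S$ only. But there exist non-exceptional graphs $H$ and maximum $\G_j$-free sets $S$ for which no valid partition of $H\setminus S$ exists at all, so no recolouring confined to $H\setminus S$ can succeed. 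Concretely, for the statement at hand take $d_1=1$, $d_2=2$, and let $H$ be a triangle $abc$ with one pendant vertex attached to each of $a,b,c$; then $\Delta(H)=3\ge 3$ and $\omega(H)=3\le\Delta(H)$, the three pendant vertices form a maximum independent set $S$ (a maximum $0$-degenerate set), and yet $H\setminus S$ is the triangle, which is not $1$-degenerate. Since $k=2$, the class $V_2=V(H)\setminus S$ is forced --- there is no other class to recolour into --- so the ``rigid'' configuration appears even though $H$ satisfies all hypotheses of the theorem; your claim that rigidity occurs only for the excluded graphs is false under your choice of $S$. (The same phenomenon occurs in Catlin's own setting: $K_4$ with a pendant at each vertex, $S$ the four pendants, leaves $K_4$ to be coloured with $3$ colours.) The theorem is nevertheless true for these graphs, but only because a \emph{different} maximum independent set works, e.g.\ $\{a,b',c'\}$ in the triangle example.

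What is missing is precisely the heart of the paper's proof, Lemma~\ref{le0}: the set $S$ must be chosen not merely of maximum size but, among all such sets, so that $H\setminus S$ has as few $(\Delta(H)-d_j)$-regular connected subgraphs as possible, and then so that $H[S]$ has as few connected components as possible; the obstruction analysis then proceeds by swapping vertices between $S$ and $H\setminus S$ (sets of the form $(S\setminus\{y\})\cup\{v\}$), not by recolouring inside $H\setminus S$. Only with these secondary extremal criteria can one show that a surviving $(\Delta(H)-d_j)$-regular component forces $H$ to be $K_{\Delta(H)+1}$, an odd cycle, or a member of $\G_j$ itself. Your proposal contains no mechanism for modifying $S$, so this step cannot be carried out. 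A secondary, more routine omission: you never return to the statement actually being proved, namely the identification of $(d_i-1)$-degeneracy with $\G^{^{\geq d_i}}$-freeness, and the check that $\omega(H)\le\Delta(H)$ rules out $H\cong K_{\Delta(H)+1}$ while $\Delta(H)\ge 3$ rules out the case $K_2\in\G_i$ for all $i$.
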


One can easily show that the following result due to Bollob{\'a}s and Manvel can be extended to their $\G_i$-free versions (instead of ($d_i-1$)-degeneracy). 
\begin{alphlemma}\label{lem1}{\rm\cite{Bollob}}
Let $H$ be a graph with maximum degree $\Delta(H)\geq 3$ and $\omega(H)\leq \Delta(H)$.
 If $\Delta(H)= d_1+d_2$, then the vertices of $H$ can be partitioned into two sets $V_1$ and $V_2$ such that $\Delta(H[V_1])\leq d_1$, $\Delta(H[V_2])\leq d_2$, $H[V_1]$ is $(d_1-1)$-degenerate and $H[V_2]$ is $(d_2-1)$-degenerate.  
\end{alphlemma}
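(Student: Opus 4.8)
The plan is to realize the partition as the minimizer of a weighted count of monochromatic edges, to read the maximum-degree bounds off from local exchange arguments, and finally to upgrade those bounds to $(d_i-1)$-degeneracy by a Catlin-type recoloring of the regular components. Throughout write $e(H[V_i])$ for the number of edges of $H[V_i]$ and $\deg_{V_i}(v)=|N_H(v)\cap V_i|$, and note that in every application $d_1,d_2\ge 1$. Among all partitions $V(H)=V_1\cup V_2$ I would fix one minimizing
\[
\Phi(V_1,V_2)=d_2\,e(H[V_1])+d_1\,e(H[V_2]).
\]

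The maximum-degree bounds come out immediately. If some $v\in V_1$ had $\deg_{V_1}(v)\ge d_1+1$, then from $\deg_H(v)\le\Delta(H)=d_1+d_2$ we get $\deg_{V_2}(v)\le d_2-1$, so moving $v$ to $V_2$ would change $\Phi$ by $-d_2\deg_{V_1}(v)+d_1\deg_{V_2}(v)\le -d_2(d_1+1)+d_1(d_2-1)=-(d_1+d_2)<0$, contradicting minimality; the symmetric computation bounds the degrees in $V_2$. Hence every $\Phi$-minimizer satisfies $\Delta(H[V_1])\le d_1$ and $\Delta(H[V_2])\le d_2$.

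For degeneracy I would first locate the obstructions. Because $\Delta(H[V_1])\le d_1$, the subgraph $H[V_1]$ fails to be $(d_1-1)$-degenerate precisely when it contains a subgraph of minimum degree $\ge d_1$, and such a subgraph must be an entire $d_1$-regular connected component; let $R_1$ be the union of these components and define $R_2\subseteq V_2$ symmetrically. For $v\in R_1$ one has $\deg_{V_1}(v)=d_1$ and $\deg_{V_2}(v)\le d_2$, and relocating $v$ changes $\Phi$ by $d_1(\deg_{V_2}(v)-d_2)\le 0$; minimality forces equality, so every vertex of $R_1$ (and of $R_2$) has full degree $\Delta(H)$ with exactly $d_2$ (resp. $d_1$) neighbours across the cut. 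Since this relocation produces another $\Phi$-minimizer, that new partition must again obey $\Delta(H[V_2])\le d_2$; consequently no $V_2$-neighbour $w$ of $v$ can already have $\deg_{V_2}(w)=d_2$, which yields the clean structural fact that \emph{there are no edges between $R_1$ and $R_2$}.

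It remains to empty $R_1\cup R_2$. Passing to a $\Phi$-minimizer that secondarily minimizes $|R_1|+|R_2|$, I would try to dissolve a component $C\subseteq R_1$ by moving one full-degree vertex $v\in C$ into $V_2$: this preserves $\Phi$, and since $C$ is connected and $d_1$-regular, every component of $C-v$ retains a neighbour of $v$ of degree $d_1-1$, so no $d_1$-regular subgraph survives in $V_1$. The hard part, and where I expect the real effort to go, is that this move may create a fresh $d_2$-regular component $C'\ni v$ inside $V_2$, so that $|R_1|+|R_2|$ need not strictly drop; resolving this calls for a Catlin-style augmenting/alternating recoloring that pushes the defect along a path until it can be absorbed. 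This is exactly where the hypotheses enter: $\Delta(H)\ge 3$ excludes the odd-cycle obstruction (which lives at $\Delta=2$), and $\omega(H)\le\Delta(H)$ excludes $K_{\Delta(H)+1}$, the one graph that cannot be split at all, since a clique $K_m$ is $(d_i-1)$-degenerate only when $m\le d_i$, so a degenerate partition of $K_{\Delta(H)+1}$ would need $|V_1|\le d_1$ and $|V_2|\le d_2$, i.e. only $\Delta(H)$ vertices in all. Outside these excluded cases I expect the augmenting argument to terminate with $R_1=R_2=\emptyset$, delivering the desired partition.
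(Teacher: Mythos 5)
Your setup is sound and is, in fact, the classical one: minimizing $\Phi=d_2\,e(H[V_1])+d_1\,e(H[V_2])$ is exactly the ``optimal partition'' device of Lov\'asz and of Bollob\'as--Manvel, your exchange computation correctly yields $\Delta(H[V_i])\le d_i$ for every minimizer, the identification of the obstructions to $(d_i-1)$-degeneracy as $d_i$-regular components all of whose vertices have full degree $\Delta(H)$ is correct, and the deduction that there are no edges between $R_1$ and $R_2$ (because a $\Phi$-neutral relocation must again produce a partition obeying the degree bounds) is a nice, valid observation. Note, for the record, that the paper itself does not prove this lemma at all: it is quoted from Bollob\'as--Manvel as background, and the closest thing the paper contains is its own Lemma~\ref{le0}, which carries out an argument of precisely this type for $G$-free partitions.

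The difficulty is that everything after your structural observations is asserted rather than proved, and that remaining part is where the entire content of the theorem lies. When a vertex $v$ is moved out of a $d_1$-regular component $C$ and a fresh $d_2$-regular component $C'\ni v$ appears in $V_2$, you say this ``calls for a Catlin-style augmenting/alternating recoloring'' and that you ``expect the augmenting argument to terminate.'' That expectation \emph{is} the theorem. The exchange is $\Phi$-neutral, your secondary quantity $|R_1|+|R_2|$ need not decrease (the new component $C'$ can be as large as or larger than $C$), and the chain of forced exchanges $C, C', C'',\ldots$ can loop back and re-enter previously disturbed components; handling that recurrence is exactly where the hypotheses $\omega(H)\le\Delta(H)$ and $\Delta(H)\ge 3$ must be used \emph{positively}, by showing that a configuration in which no exchange makes progress forces complete-graph (or odd-cycle) structure --- e.g.\ via double-exchange arguments proving that the relevant neighbourhoods induce cliques, so that $H$ would contain $K_{\Delta(H)+1}$, contradicting $\omega(H)\le\Delta(H)$. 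This is what occupies essentially the whole of the paper's proof of Lemma~\ref{le0}: the uniqueness and regularity of the copy $G_v$ (Claims~\ref{c6} and~\ref{c7}), the clique-forcing exchange in Claim~\ref{cc}, the chain of regular subgraphs $H_0,H_1,\ldots,H_\ell$ with minimal $\ell$ such that $V(H_\ell)$ meets some earlier $V(H_j)$, and the ensuing case analysis. Your proposal stops exactly where that work begins. The remarks you do make about the hypotheses (odd cycles and $K_{\Delta(H)+1}$ admit no such partition) are correct but establish only their necessity, not the sufficiency your argument needs; as it stands, the proof has a genuine gap at its central step.
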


Also, it is worth mentioning the following result  of Lov{\'a}sz which has a  close relation to the previous  result of Bollob{\'a}s and Manvel.
\begin{alphtheorem}\label{LV}{\rm\cite{Lovasz}} If $d_1\geq d_2\geq\ldots\geq d_k$ are positive integers  such that $d_1+d_2+\ldots+d_k \geq \Delta(H)+1$, then $V(H)$ can be decomposed into subsets $V_1,V_2,\ldots,V_k$, such that $\Delta(H[V_i])\leq d_i-1$ for each $1\leq i\leq k$.
\end{alphtheorem}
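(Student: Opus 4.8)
The plan is to prove this by an extremal (local‑switching) argument on partitions, using a carefully weighted count of monochromatic edges as the quantity to be minimized. For a partition $\mathcal{P}=(V_1,\dots,V_k)$ of $V(H)$ into $k$ labelled parts, write $e(H[V_i])$ for the number of edges of $H$ with both endpoints in $V_i$, and define the potential
\[
\Phi(\mathcal{P})=\sum_{i=1}^{k}\frac{e(H[V_i])}{d_i}.
\]
Since each $d_i$ is a positive integer (so the weights $1/d_i$ are well defined) and there are only finitely many partitions of the finite set $V(H)$ into $k$ labelled parts, $\Phi$ attains its minimum; I fix a partition $\mathcal{P}=(V_1,\dots,V_k)$ achieving it. The claim is that this partition already satisfies $\Delta(H[V_i])\le d_i-1$ for every $i$, which is exactly the desired conclusion.

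To establish the claim, for a vertex $v$ and an index $j$ let $\deg_{V_j}(v)=|N_H(v)\cap V_j|$ denote the number of neighbours of $v$ lying in $V_j$. Suppose, for contradiction, that some part $V_i$ contains a vertex $v$ with $\deg_{V_i}(v)\ge d_i$. The key local move is to transfer $v$ from $V_i$ to another part $V_j$ with $j\neq i$: this destroys the $\deg_{V_i}(v)$ edges at $v$ inside $V_i$ and creates $\deg_{V_j}(v)$ new edges inside $V_j$, so $\Phi$ changes by $\frac{\deg_{V_j}(v)}{d_j}-\frac{\deg_{V_i}(v)}{d_i}$. By minimality of $\Phi$ this change is nonnegative, whence
\[
\frac{\deg_{V_j}(v)}{d_j}\ \ge\ \frac{\deg_{V_i}(v)}{d_i}\ \ge\ 1\qquad\text{for every }j\neq i,
\]
and therefore $\deg_{V_j}(v)\ge d_j$ for all such $j$; the same inequality holds for $j=i$ by the standing assumption $\deg_{V_i}(v)\ge d_i$. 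Summing over all $j$ yields
\[
\deg_H(v)=\sum_{j=1}^{k}\deg_{V_j}(v)\ \ge\ \sum_{j=1}^{k}d_j\ \ge\ \Delta(H)+1,
\]
which contradicts $\deg_H(v)\le\Delta(H)$. Hence no such $v$ exists: $\deg_{V_i}(v)\le d_i-1$ for every $v\in V_i$ and every $i$, so $\Delta(H[V_i])\le d_i-1$ and the partition is as required.

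The only genuinely delicate point is the choice of the weights $1/d_i$ in $\Phi$. An unweighted count of monochromatic edges would only force $\deg_{V_j}(v)\ge\deg_{V_i}(v)\ge d_i$ for all $j$, giving $\deg_H(v)\ge k\,d_i$, which is too weak unless all the $d_i$ coincide. Weighting by $1/d_i$ is precisely calibrated so that the lower bound obtained in each part $V_j$ is $d_j$ rather than a common value, making the final sum equal $\sum_j d_j$ and hit the threshold $\Delta(H)+1$ exactly; this is the crux I expect to require the most care to get right. If one prefers to avoid fractions, one can equivalently minimize $\sum_i (D/d_i)\,e(H[V_i])$ with $D=\prod_{\ell}d_\ell$, a nonnegative integer‑valued potential, and run the identical argument. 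I note finally that the hypothesis $d_1\ge\cdots\ge d_k$ plays no role in the proof; only the degree‑sum condition $\sum_i d_i\ge\Delta(H)+1$ is used, and the degenerate case $k=1$ (where $V_1=V(H)$ and $\Delta(H)\le d_1-1$) is covered automatically, since then there is a single partition to consider.
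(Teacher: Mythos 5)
Your proof is correct: minimizing the weighted potential $\Phi(\mathcal{P})=\sum_{i=1}^{k}e(H[V_i])/d_i$ over labelled partitions and applying the single-vertex switching argument is exactly the classical proof of this theorem, and every step (the sign of the change in $\Phi$, the deduction $\deg_{V_j}(v)\ge d_j$ for all $j$, and the final degree-sum contradiction with $\Delta(H)+1$) checks out. The paper does not reproduce a proof --- it only cites Lov\'asz's 1966 paper --- and your argument coincides with Lov\'asz's standard one, including your accurate observation that the hypothesis $d_1\ge d_2\ge\cdots\ge d_k$ is never actually used.
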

Note that if one chooses $k=\Delta(H)+1$ and $d_1=\cdots=d_k=1$, then this result implies that $\chi(H)\leq \Delta(H)+1$. Also, it is instructive to note that $\Delta(H)+1$ can not be replaced by $\Delta(H)$ in Theorem~\ref{LV}. To see this, consider the following example. Set $H=K_{3,3,3}$ which has maximum degree $6$ and assume that
$k=2$ and $d_1=d_2=3$, and note that there is not any decomposition of vertices of $K_{3,3,3}$ to subsets $V_1$ and $V_2$ such that 
$\Delta(H[V_i])\leq 2$. Of course, one can find some other nontrivial examples, too. Moreover,  one may construct a graph $H$ for which $\Delta(H)=d_1+d_2$ and
$H$ can not  be decomposed into two subsets $V_1,V_2$ such that $\Delta(H[V_i])\leq d_i-1$ for each $i\in\{1,2\}$ (see \cite{Bollob}).
Also, if $H$ and $G$ are connected graphs with maximum degrees $\Delta(H)$ and $\Delta(G)$, respectively, then, as a consequence of Theorem~\ref{LV} we have
$\chi_{_G}(H)\leq\lceil \frac{\Delta(H)+1}{\Delta(G)}\rceil \leq\lceil \frac{\Delta(H)+1}{\delta(G)}\rceil.$
\section{ Proofs}
\noindent The following lemma is the main part of  the proof of Theorem~\ref{1th}.
\begin{lemma}\label{le0} Let $G$ and $H$ be two connected graphs, where $G$ has the minimum degree $d$
and $H$ has the maximum degree $\Delta(H)$ where $\Delta(H)\geq d\geq 1$.
Assume that $S\subseteq V(H)$, $H[S]$ is  $G$-free and  $S$ has the maximum possible size.
 Suppose that $H\setminus S$ has as few  connected $(\Delta(H)-d)$-regular  subgraphs as possible. 
 Also, suppose that $H[S]$ has the minimum possible number of connected components.
 If $H\setminus S$ has a 
 $(\Delta(H)-d)$-regular connected subgraph, say $H_0$, then
\begin{itemize}
\item[(a)] for any vertex  $v\in V(H_0)$, $|N(v)\cap S|=d$,
\item[(b)]  the induced subgraph $H[S\cup\{v\}]$ has a unique copy of $G$, say $G_v$, 
such that $G_v$ is a $d$-regular component of $H[S\cup\{v\}]$, and
\item[(c)] Either $G$ is isomorphic to $ K_{d+1}$  and $H$ is isomorphic to $ K_{\Delta(H)+1}$,  
 $G=K_2$  and $H$ is isomorphic to $C_{2\ell+1}$ for some positive integer $\ell$,  
or $H$ is isomorphic to $ G$.
\end{itemize}
\end{lemma}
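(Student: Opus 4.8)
The plan is to establish (a), (b), (c) in that order, with (a) a short degree count, (b) the technical heart via an exchange argument driven by the two minimality conditions, and (c) a global propagation using the connectivity of $H$. Throughout set $\Delta=\Delta(H)$. For (a), since $S$ is a maximum $G$-free set while $S\cup\{v\}$ is strictly larger, $H[S\cup\{v\}]$ must contain a copy of $G$; as $H[S]$ is $G$-free this copy uses $v$, where its degree is at least $\delta(G)=d$, so $|N(v)\cap S|\geq d$. On the other hand $v\in V(H_0)$ has $\deg_{H_0}(v)=\Delta-d$ neighbours outside $S$, whence $\deg_H(v)\geq |N(v)\cap S|+(\Delta-d)\geq\Delta$. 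Since $\deg_H(v)\leq\Delta$ every inequality is tight, giving $|N(v)\cap S|=d$, $\deg_H(v)=\Delta$, and $N(v)\setminus S=N(v)\cap V(H_0)$ (all outside-$S$ neighbours of $v$ already lie in $H_0$). These equalities, valid for every $v\in V(H_0)$, drive the rest of the argument.

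For (b), fix $v\in V(H_0)$. By (a) the $d$ vertices of $N(v)\cap S$ are the only neighbours of $v$ in $S\cup\{v\}$, so in every copy of $G$ inside $H[S\cup\{v\}]$ the vertex $v$ has degree exactly $d$ and uses all of them. Let $G_v$ be one such copy and let $C$ be the component of $H[S\cup\{v\}]$ containing $v$, so $G_v\subseteq C$. The aim is to prove that $C$ is $d$-regular and equals $G_v$. I would argue by contradiction: if some vertex of $C$ had degree greater than $d$, then choosing a suitable $w\in V(C)\cap S$ and forming $S'=(S\setminus\{w\})\cup\{v\}$, a set of the same size, one checks that $H[S']$ is again $G$-free and hence a maximum $G$-free set; but its complement differs from $H\setminus S$ only by deleting $v$ from $H_0$ and inserting $w$. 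Deleting $v$ dismantles the regular component $H_0$ (its former neighbours fall to degree $\Delta-d-1$), so unless inserting $w$ recreates exactly one $(\Delta-d)$-regular component this contradicts minimality condition (2), and the remaining tie is eliminated by condition (3) on the number of components of $H[S]$. Ruling out all such swaps forces $C$ to be $d$-regular; then comparing the edge count of the $d$-regular graph $C$ with that of the spanning copy $G_v\subseteq C$ of minimum degree $d$ yields $V(C)=V(G_v)$, uniqueness of the copy, and $d$-regularity of $G$ itself.

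For (c), by (b) the graph $G$ is $d$-regular and every $v\in V(H_0)$ attaches to $S$ through a $d$-regular copy $G_v\cong G$ that forms a full component of $H[S\cup\{v\}]$, while retaining its remaining $\Delta-d$ neighbours inside $H_0$. If $\Delta=d$, then $H_0$ is a single vertex, $H\setminus S=\{v\}$, and $G_v$ is a component of the connected graph $H$, forcing $H\cong G$. If $\Delta>d$, I would take two adjacent vertices of $H_0$ and compare their copies $G_v$ and $G_{v'}$: the rigidity from (a)--(b) (exact degrees, and all outside-$S$ neighbours confined to $H_0$) forces these copies to overlap heavily, and propagating this constraint along $H_0$ and outward into $S$ using the connectivity of $H$ forces each copy to be complete, that is $G\cong K_{d+1}$ and $H\cong K_{\Delta+1}$; the only low-degree escape, occurring when $d=1$ and $\Delta=2$, is the odd cycle $C_{2\ell+1}$.

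I expect the main obstacle to be the exchange bookkeeping in (b): precisely tracking how the number of $(\Delta-d)$-regular components of the complement changes under the swap $S\mapsto(S\setminus\{w\})\cup\{v\}$, and showing that the destruction of $H_0$ caused by deleting $v$ cannot be offset by inserting $w$ without contradicting condition (2), together with the correct use of condition (3) to break the residual tie. The second delicate point is the propagation in (c), where the purely local rigidity at each vertex of $H_0$ must be upgraded, via the clique structure of $G$ and the connectivity of $H$, into the global conclusion $H\cong K_{\Delta+1}$ or an odd cycle.
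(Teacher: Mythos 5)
Your part (a) is correct and is essentially the paper's own degree count. Parts (b) and (c), however, are outlines whose missing steps are exactly the hard content of the lemma, and the mechanisms you sketch would not close them as stated. In (b), the swap $S'=(S\setminus\{w\})\cup\{v\}$ preserves $G$-freeness only if $w$ lies in \emph{every} copy of $G$ in $H[S\cup\{v\}]$ (a priori there may be many copies), and the contradiction with the minimality of the number of $(\Delta-d)$-regular connected subgraphs additionally requires that $w$ have at least $d+1$ neighbours in $S'$: then $w$ cannot lie in any $(\Delta-d)$-regular subgraph of $H\setminus S'$, every regular subgraph through $v$ (in particular $H_0$) is destroyed, and the count strictly drops. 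You never explain how to produce a vertex with \emph{both} properties at once, and this is precisely what the paper's distance-layering argument in Claim~\ref{c6} supplies: take the largest $i$ such that every vertex at distance $i$ from $v$ in $H[S\cup\{v\}]$ lies in all copies, pick $w$ at distance $i+1$ avoided by some copy $G^*$, and let $y$ be a neighbour of $w$ at distance $i$; then $y$ lies in all copies, and its $\geq d$ neighbours in $G^*$ together with $w\notin V(G^*)$ give it degree at least $d+1$. Moreover, your proposed tie-breaking via condition (3) misreads the roles of the hypotheses: the minimality of the number of components of $H[S]$ is not used in part (b) at all, and it could not break such a tie, since nothing guarantees your swap reduces that number; in the paper it enters only in Claim~\ref{c8}, to guarantee that $G_v\setminus\{v\}$ is connected. (Your final step in (b) --- that once the component $C$ containing $v$ is $d$-regular it must coincide with $G_v$, giving uniqueness and regularity of $G$ --- is fine; the gap is in establishing $d$-regularity.)

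In (c), the assertion that for adjacent $v,v'\in V(H_0)$ the copies $G_v$ and $G_{v'}$ are forced to ``overlap heavily'' is unsupported, and it is exactly what can fail: the paper proves a dichotomy (Claims~\ref{cc} and~\ref{c8}) --- either $G_v\setminus\{v\}=G_w\setminus\{w\}$, in which case further swaps yield $G\cong K_{d+1}$, $H_0\cong K_{\Delta-d+1}$ and then $H\cong K_{\Delta+1}$, or else $N(v)\cap N(w)\cap S=\varnothing$, i.e.\ the two copies share no attachment vertices whatsoever. The entire difficulty of the proof is this second alternative, which your outline does not address: the paper handles it by iterating the exchange $S_i=(S_{i-1}\setminus\{y_{i-1}\})\cup\{v_{i-1}\}$, tracking the sequence of $(\Delta-d)$-regular subgraphs $H_1,H_2,\ldots$ forced to exist by minimality, using finiteness of $H$ to find the first index $\ell$ at which $V(H_\ell)$ meets an earlier $V(H_j)$, and then invoking Claim~\ref{cl11} with a case analysis ($d\geq 2$ versus $d=1$ with $|V(H_0)|\geq 3$). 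Connectivity of $H$ alone cannot substitute for this; local rigidity at each vertex of $H_0$ is perfectly consistent with the copies $G_v$ being pairwise remote inside $S$. So both (b) and (c) contain genuine gaps: you have identified the right statements and the general exchange philosophy, but the two key devices --- the choice of the exchange vertex in (b), and the dichotomy-plus-iteration in (c) --- are missing.
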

\begin{proof}
By the maximality of $S$, for each vertex  $v\in V(H)\setminus S$,  $H[S\cup\{v\}]$ has a copy of $G$.
Therefore, $ |N(v)\cap S|\geq d$
and consequently
$$\Delta(H\setminus S)\leq \Delta(H)-d.$$
Thus,  $H_0$ is a connected component of $H\setminus S$. Hence, for any $v\in V(H_0)$,
$|N(v)\cap (V(H)\setminus S)|= \Delta(H)-d$. Consequently, for any $v\in V(H_0)$, we have $ |N(v)\cap S|= d.$

To prove Part~(b), let $v\in V(H_0)$ and $G_v$  be a copy of $G$ in $H[S\cup\{v\}]$. Since $G$ has minimum   degree
$d$  and $|N(v)\cap S|=d$, we have $N(v)\cap S\subseteq V(G_v)$.

\begin{claim}\label{c6}
The subgraph $G_v$  is a unique copy of $G$ in $H[S\cup\{v\}]$ and moreover $G_v$ is a $d$-regular graph.
\end{claim}
\begin{proof}[Proof of Claim~\ref{c6}] By contradiction  suppose that there are  two copies of $G$ in 
 $H[S\cup\{v\}]$ such that these two  copies of $G$ in $ H[S\cup\{v\}]$ have different  vertex sets.
If  $u\in S\cup\{v\}$ and $d_{H[S\cup\{v\}]}(v,u)\leq 1$, then $u$ lies in all copies of $G$ in $H[S\cup\{v\}]$.
Now, let $i\geq 1$ be the largest positive integer such that for any vertex $u\in S\cup\{v\}$ with $d_{H[S\cup\{v\}]}(v,u)=i$, we have 
$u$ lies in all copies of $G$ in $ H[S\cup\{v\}]$.   Since there exist at least two  copies of $G$ with different vertex sets, there exists at least one vertex  $w\in S$ and a copy of $G$ in  $ H[S\cup\{v\}]$, say $G^*$, such that $d_{H[S\cup\{v\}]}(v,w)=i+1$ and $w\notin V(G^*)$.
Therefore, there is at least one neighbor of $w$ in $S$, say $y$, such that $d(v,y)= d(v,w)-1=i$. 
 Since  $d(v,y)=i$, $y$ lies in all copies of $G$ in $H[S\cup\{v\}]$. Set $S_1=(S\setminus\{y\})\cup\{v\}$.  Note that $|S_1|=|S|$ and $H[S_1]$ is  $G$-free because $y$ lies in all copies of $G$ in $ H[S\cup\{v\}]$. Since  $y$ is in at least two copies of $G$ in $H[S\cup\{v\}]$ and one of them does not contain $w$, we have
 $|N(y)\cap(S\cup\{v\})|=|N(y)\cap S_1|\geq d+1$. Therefore, $|N(y)\cap (V(H) \setminus S_1)|\leq \Delta(H)-d-1$. As a consequence $y$ does not lie in any $(\Delta(H)-d)$-regular subgraph in  $H\setminus S_1$. Hence, the number  of
  $(\Delta(H)-d)$-regular connected subgraphs of $H\setminus S_1$ is less than that of $H\setminus S$, 
 which contradicts the assumption that $H\setminus S$ has as few $(\Delta(H)-d)$-regular subgraphs as possible. 
 Thus,  $H[S\cup\{v\}]$ contains only one copy of $G$.
 
Now assume that all copies of $G$ in $ H[S\cup\{v\}]$ has the same vertex set. If there  exist at least two distinct copies of $G$ in $ H[S\cup\{v\}]$ with the same vertex set, then there is a vertex $u\in V(G_v)\subseteq S$ such that 
$|N(u)\cap (S\cup\{v\})|\geq d+1$. Define $S_1=(S\setminus\{u\})\cup\{v\}$. Since $H[S_1]$ is $G$-free, $|S_1|=|S|$, and 
$H\setminus S$ has as few $(\Delta(H)-d)$-regular connected subgraphs as possible, so $u$ must lie in a $(\Delta(H)-d)$-regular subgraph in $H\setminus S_1$.
Therefore, $|N(u)\cap (V(H)\setminus(S\cup\{v\}))|\geq \Delta(H)-d $ and consequently $|N(u)|=\deg(u)\geq \Delta(H)+1$, which is a contradiction.
 \end{proof}
 Assume that $G_v$ is a subgraph of
$H[S\cup\{v\}]$ but is not one of its connected components.
Thus, 
there is at least one vertex of $G_v$, say $u$, such that $|N(u)\cap (S\cup\{v\})|\geq d+1$.
Therefore, using the same reasoning as the previous paragraph we can prove that $G_v$ is a component of $H[S\cup\{v\}]$.
\begin{claim}\label{c7}
The subgraph $G_v$ is a component of $H[S\cup\{v\}]$.
\end{claim}
To prove Part~{\rm (c)}, set $S_0=S$. In view of  Part~(a), we have
 $H_0$ is a component of $H\setminus S_0$. 
 
 Assume that $H_0$ has only one vertex, say $v$. Now by Claims \ref{c6} and \ref{c7}, $H[S_0\cup\{v\}]$ has a unique copy of $G$, which is a component of $H[S_0\cup\{v\}]$. Since $H$ is connected and  $\Delta(H)=d$, we have $H$ is isomorphic to $G$.   Assume that $|V(H_0)|=2$ and $d=1$.
Then, $H_0\cong K_2$ and  from Claim~\ref{c6} we have $G\cong K_2$. 
 Consequently, $\Delta(H)=2$. Since $H$  is connected, we have $H$ is path or cycle. 
As $S$ independent set of maximum size and $H\setminus S$ has a copy of $K_2$,  $H$ must be an odd cycle.
  Therefore, we may assume that either $|V(H_0)|\geq 3$ or $d\geq 2$.

  Let $v\in V(H_0)$. By using Part~(b),
$H[S_0\cup \{v\}]$ has a unique copy of $G$, say $G_{v}$.
\begin{claim}\label{cc} Let $v$ be a vertex of $V(H_0)$ which is not a cut vertex.  If for some $w\neq v$ in  $V(H_0)$ we have 
$G_{v}\setminus\{v\}=G_{w}\setminus\{w\}$, then the statement of Part~{\rm (c)} holds.
\end{claim}
 \begin{proof}[proof of Claim \ref{cc}.]
Since $G_v$ and $G_w$ are $d$-regular and $G_{v}\setminus\{v\}=G_{w}\setminus\{w\}$, we have
$N(v)\cap S_0=N(w)\cap S_0$.
 We show that $H[N(v)\cap S_0]\cong K_{d}$.
 By contradiction assume that there  exist two vertices $y$ and $y'$ in $N(v)\cap S_0$ such that $yy'\notin E(H)$. Define $S^*=(S_0\setminus\{y,y'\})\cup \{v,w\}$. One can check that $|S^*|=|S|$ and $|N(v)\cap S^*|=|N(w)\cap S^*|\leq d-1$ and hence $H[S^*]$ is $G$-free. Since   $|N(y)\cap S^*|\geq d+1$ and
$|N(y')\cap S^*|\geq d+1$, $y$ and $y'$ are not in any  $(\Delta(H)-d)$-regular subgraph in $H\setminus S^* $. Hence, 
the number of $(\Delta(H)-d)$-regular connected  subgraphs of $H\setminus S^*$ is less than that  of  $H\setminus S_0$, which contradicts the assumption that
 $H\setminus S_0$ has as few $(\Delta(H)-d)$-regular connected subgraphs as possible. Hence, for every two vertices $y$ and $y'$ in $N(v)\cap S_0$, $yy'\in E(H)$.   Therefore, $H[N(v)\cap S_0]\cong K_{d}$ and moreover $G_{v}\cong K_{d+1}$. 
 
For every  vertex $y\in N(v)\cap S_0$,   we shall show that the subgraph induced by $N(y)\setminus S_0$ is isomorphic to 
the complete graph $K_{\Delta(H)-d+1}$.
Define  $S_1=(S_0\setminus\{y\})\cup\{v\}$. One can check that   $|S_1|=|S_0|$ and $H[S_1]$ is $G$-free. 
Since  $H\setminus S_0$ has as few  $(\Delta(H)-d)$-regular connected subgraphs  as possible,
 $y$ must lie in a   $(\Delta(H)-d)$-regular connected subgraph  in $H\setminus S_1$, say $H_1$. 
Therefore,   the number of neighbors of $y$  in $H\setminus S_0$ is $\Delta(H)-d+1$.

As $ N(w)\cap S_0=N(v)\cap S_0$, we have $y$ is adjacent to $w$ and moreover $w\in V(H_1)$. Since $H_0\setminus v$ is connected and $w\in V(H_0)\cap V(H_1)$, we have $(V(H_0)\setminus v)\subseteq V(H_1)$; otherwise there is a vertex in $V(H_0)\cap V(H_1)$
 has degree greater than $\Delta(H)$, which is not possible.  
Since $H_0$ and $H_1$ are  $(\Delta(H)-d)$-regular, so $N(v)\setminus S_0=N(y)\setminus (S_0\cup\{v\})$. Therefore,
$N(y)\setminus S_0$ is a subset of $V(H_0)$.
Assume that $v'$ and $v''$ are two neighbors of $y$  in $H\setminus S_0$. We show that $v'v''\in E(H)$.
On the contrary,  assume that $v'$ is not adjacent to $v''$. By Part~(a), $|N(v')\cap S_0|=|N(v'')\cap S_0|=d$.
Define  $S_2=(S_0\setminus\{y\})\cup\{v',v''\}$. One can check that   $|S_2|=|S_0|+1$ and 
$|N(v')\cap S_2|=|N(v'')\cap S_2|=d-1$. Hence,
 $H[S_2]$ is $G$-free, which  contradicts  the maximality of $S_0$. Therefore, $v'v''\in E(H)$ and consequently
 the subgraph induced by $N(y)\setminus S_0$ is isomorphic to the complete graph $K_{\Delta(H)-d+1}$. Therefore, 
 $H_0\cong K_{\Delta(H)-d+1}$.

For any two vertices  $y$ and $y'$ in $ N(v)\cap S_0$, we shall show $N(y)\setminus S_0=N(y')\setminus S_0$.
The vertex $v$ belongs to $(N(y)\cap N(y')) \setminus S_0$. On the contrary, suppose that there is a vertex $u\in N(y)\setminus S_0$
such that 
$u\not \in N(y')\setminus S_0$. As $v$ is adjacent to $u$ and $v\in  N(y')\setminus S_0$, so $v$  
has at least  $\Delta(H)-d+1$ neighbors in $H\setminus S_0$. Consequently, $\deg(v)\geq \Delta(H)+1$ which is not possible. Therefore, every vertex $y\in N(v)\cap S_0$ is adjacent to all vertices of $H_0$.
Since $H[N(v)\cap S_0]$ is isomorphic to $K_d$, the subgraph induced by 
$N(y)\setminus S_0$ is isomorphic to the complete graph $K_{\Delta(H)-d+1}$, and 
  every vertex $y\in N(v)\cap S_0$ is adjacent to all vertices of $H_0$, we conclude that
  $H[N(v)\cap S_0]\vee H_0\cong K_{\Delta(H)+1}$ is a  subgraph of $H$. Since $H$ is connected, we have 
$H\cong  K_{\Delta(H)+1}$.
\end{proof}

\noindent Now  assume that for  two vertices $v,w$  in $V(H_0)$ we have
 $G_{v}\setminus\{v\}\neq G_{w}\setminus\{w\}$.
 \begin{claim}\label{c8}
 If  $G_{v}\setminus\{v\}\neq G_{w}\setminus\{w\}$, then
$N(v)\cap N(w)\cap S_0=\varnothing$.
\end{claim}

 \begin{proof}[proof of Claim \ref{c8}.]
If $V(G_{v}\setminus\{v\})=V(G_{w}\setminus\{w\})$, then $E(G_{v}\setminus\{v\})\setminus E(G_{w}\setminus\{w\})\neq\varnothing$ and hence we can find a vertex in $G_{w}$ with degree greater than $d$. This is a contradiction because from Part~(b), $G_{w}$ is a $d$-regular component of $H[S_0\cup\{w\}]$. 
Therefore, there exists at least one vertex $u\in V(G_{v}\setminus\{v\})\setminus V(G_{w}\setminus\{w\})$.

Suppose, by way of contradiction, that $y\in N(v)\cap N(w)\cap S_0$.
Since $H[S_0]$ has the minimum number of connected components,   we  conclude that $G_{v}\setminus v$ is connected; otherwise as $G_v$ is a connected component of $H[S_0\cup\{v\}]$, choose a vertex $v'\in V(G_v)$ such that $G_{v}\setminus v'$ remains connected. Define $S'=(S_0\setminus\{v'\})\cup\{v\}$. One can check that
$|S'|=|S_0|$, $H[S']$ is $G$-free, and $H\setminus S'$ contains the same number of $(\Delta(H)-d)$-regular connected subgraphs  as  $H\setminus S_0$. But the number of connected components of $H[S']$ is less
than that of $H[S_0]$, which is impossible.  
As $G_{v}\setminus v$ is connected, there is a shortest path $P$  from  $y$ to $u$ in $G_{v}\setminus v$. Let $y'$ be the last vertex of $P$ in $V(G_{v}\setminus\{v\})\cap V (G_{w}\setminus\{w\})$.
Define $S^*=(S_0\setminus\{y'\})\cup \{w\}$. One can check that $|S^*|=|S_0|$ and $H[S^*]$ is $G$-free. 
 The vertex $y'$ has at least $d+1$ neighbors in $S^*$, because $y'$ has $d$ neighbors in $G_{w}$ and $y'$ is adjacent to  its immediate successor on  $P$ which is not $G_{w}$. Therefore,
   the number of neighbors of $y'$ in $H\setminus S^{*}$ is at most $\Delta(H)-d-1$. 
Thus, $y'$  does not lie in any $(\Delta(H)-d)$-regular subgraph in $H\setminus S^{*}$. This contradicts the assumption that $H\setminus S_0$ has 
the minimum number of  $(\Delta(H)-d)$-regular connected subgraphs.
\end{proof}

Suppose that  $v_0\in V(H_0)$ is not a cut vertex of $H_0$. Choose a vertex  $y_0\in V(G_{v_0})$ such that 
$y_0$ is not a cut vertex in $G_{v_0}$ and $y_0\neq v_0$.
Set $S_1=(S_0\setminus \{y_0\})\cup \{v_0\}$. Since $|S_1|=|S_0|$, $H[S_1]$ is $G$-free, and   $H\setminus S_0$ has as few $(\Delta(H)-d)$-regular connected subgraphs as possible,   
$y_0$ must be in a $(\Delta(H)-d)$-regular subgraph in $H\setminus S_1$, say $H_1$.  Also, 
the number of  components of $H[S_1]$ is equal to that of $H[S_0]$.
 If $V(H_0)\cap V(H_1)\neq \varnothing$, then $V(H_0)\setminus\{v_0\}\subset  V(H_1)$; otherwise there is a vertex in $V(H_0)\cap V(H_1)$
 has degree greater than $\Delta(H)$, which is not possible.  Therefore, $N(v_0)\cap V(H_0)\subseteq V(H_1)$.
  Using the same reasoning as Claim~\ref{cc}, one can show that the induced subgraph by $N(v_0)\cap V(H_1)$ is a complete
  graph and consequely  
  $H_0$ and $H_1$ are isomorphic to the complete graph
  $K_{\Delta(H)-d+1}$. 
 Since $|V(H_0)|\geq 2$, we can choose a  vertex $u$ distinct from $v_0$ in $H_0$ such that 
 $y_0\in N(v_0)\cap N(u)\cap S_0$. Therefore, by using Claim~\ref{c8} we have 
 $G_{v_0}\setminus\{v_0\}= G_{u}\setminus\{u\}$.
 Hence, Claim~\ref{cc} implies the statement.

Suppose that $V(H_0)\cap V(H_1)= \varnothing$.
For $i\geq 1$, 
assume that $H_{i-1}$, $S_{i-1}$, $v_{i-1}$, $G_{v_{i-1}}$, and $y_{i-1}$ are choosen such that
 $v_{i-1}$ is not a cut vertex in $H_{i-1}$, 
$G_{v_{i-1}}$ is a unique copy of $G$ in $H[S_{i-1}\cup\{v_{i-1}\}]$, and
 $y_{i-1}$   is not a cut vertex in $G_{v_{i-1}}$.
Set $S_i=(S_{i-1}\setminus \{y_{i-1}\})\cup \{v_{i-1}\}$.
The vertex $y_{i-1}$ must be in a  $(\Delta(H)-d)$-regular connected subgraph in $H\setminus S_i$, say $H_i$.  Also, 
the number of  components of $H[S_i]$ is equal to that of $H[S_{i-1}]$.
Choose a vertex $v_i\in V(H_i)$ such that  $v_i\neq y_{i-1}$ and  $v_i$ is not a cut vertex of $H_i$.
Assume that $G_{v_{i}}$ is a unique copy of $G$ in $H[S_{i}\cup\{v_{i}\}]$ and choose $y_{i}$  such that $y_i$ is not a cut vertex in $G_{v_{i}}$.

Since $H$ is a finite graph,  there is  the smallest number $\ell$ such that $V(H_\ell)$ intersects $V(H_j)$ for some $j\leq \ell-1$. Without loss of generality assume that $j=0$.
  As the case $V(H_0)\cap V(H_1)\neq \varnothing$, one can show that 
  $V(H_0)\setminus\{v_0\}\subset  V(H_\ell)$.

\begin{claim}\label{cl11} We can assume that
$V(G_{v_0})\setminus\{y_0\}\subseteq S_{\ell}$.
\end{claim}
 \begin{proof}[proof of Claim \ref{cl11}.]
On the contrary, assume that  $ V(G_{v_0})\setminus\{y_0\}\not\subseteq S_{\ell}$.
Let $i$ be the smallest number for which 
 $ V(G_{v_0})\setminus\{y_0\}\not\subseteq S_{i}$. Therefore,
 $y_{i-1}\in V(G_{v_0})\setminus\{y_0\}$.
 Since  $G_{v_0}\setminus\{y_0\}$ is connected in $H[S_{i-1}]$, 
 $G_{v_{i-1}}$ is $d$-regular,
  and $y_{i-1}$ lies in both $G_{v_0}\setminus\{y_0\}$ and
$G_{v_{i-1}}$, we have $V(G_{v_0})\setminus\{y_0\}\subseteq V(G_{v_{i-1}})$.
As $G_{v_0}$ and $G_{v_{i-1}}$ are $d$-regular, it follows that $N(y_0)\cap V(G_{v_0})=N(v_{i-1})\cap V(G_{v_{i-1}}).$

\noindent If $i=2$, then $N(y_0)\cap S_1=N(v_1)\cap S_1$ and consequently $|N(y_0)\cap N(v_1)\cap S_1|=d$.  By using   Claims~\ref{cc}~and~\ref{c8}  we conclude the statement of Part~{\rm (c)}. Therefore, we can assume that $i\geq 3$.

 \noindent Suppose  that    $y_0$ is adjacent to $v_{i-1}$. Since $H_{i-1}$ is $(\Delta(H)-d)$-regular, we have $y_0\in V(H_{i-1})$. Hence,
$y_0\in V(H_1)\cap V(H_{i-1})\neq\varnothing$, which contradicts  the minimality of $\ell$.
Therefore, we can assume that  $y_0$ is not adjacent to $v_{i-1}$. 
Consider the following two cases.
\item[(i)] $d\geq 2$.\\
The vertex $y_0$ has $d$ or $d+1$ neighbors in $S_{i-1}$. 
One of them may be $v_1$ and 
$d$ of them must be in $V(G_{v_0})\setminus\{y_0\}$. 
  We show that $G_{v_{i-1}}$ is  isomorphic to $K_{d+1}$.  
  If two vertices $u,u'$ in $N(v_{i-1})\cap S_{i-1}$ are not adjacent, then
 define $S'=(S_{i-1}\setminus\{u,u'\})\cup\{y_0,v_{i-1}\}$.
  The vertices $y_0$ and $v_{i-1}$ do not lie in any copy $G$ in $H[S']$ because
$y_0$ have at most $d-1$   and $v_{i-1}$ have  $d-2$ neighbors in $S'$, respectively. Thus, $H[S']$ is $G$-free.
 Both vertices $u$ and $u'$ have $d+1$ neighbors in $S'$. Therefore, $u$ and  $u'$ do not lie in any $(\Delta(H)-d)$-regular subgraph in
 $H\setminus S'$,
 which contradicts  $H\setminus S_{i-1}$ contains the minimum possible number  of $(\Delta(H)-d)$-regular connected subgraphs.
 Then $G_{v_{i-1}}$ is isomorphic to $K_{d+1}$.
 
 \noindent For some $u\in N(v_{i-1})\cap S_{i-1}$, define $S''=(S_{i-1}\setminus\{u\})\cup\{y_0,v_{i-1}\}$.
  If $y_0$ has exactly $d$ neighbors in $S_{i-1}$, then each of $y_0$ and $v_{i-1}$ has $d-1$ neighbors in $S''$ and hence $H[S'']$ is $G$-free,  which contradicts    the maximality of $S_{i-1}$. Therefore, we can assume that $y_0$ has exactly $d+1$ neighbors in $S_{i-1}$. Therefore,
  $y_0$ must be adjacent to $v_1$.

\noindent The vertex $v_1$
 is not adjacent to any vertex of $N(v_{i-1})\cap S_{i-1}=N(y_{0})\cap G_{v_0}$; otherwise if $v_1$ has a neighbor in $N(y_{0})\cap G_{v_0}$, then $N(y_{0})\cap N(v_1)\cap S_1\neq \varnothing$. Therefore,
 by Claims~\ref{cc}~and~\ref{c8} we conclude the statement of  Part~{\rm (c)}.
 Since  $v_1$ is not adjacent to any vertex in $N(y_{0})\cap G_{v_0}$ and $y_0$  has $d-1\geq 1$ neighbors in $S''$ which are not adjacent to
 $v_1$, we conclude that $y_0$ cannot lie  in a copy of $G\cong K_{d+1}$ in $H[S'']$. Also, $v_{i-1}$ has $d-1$ neighbors in $S''$.
 Thus,  $H[S'']$ is $G$-free,  which contradicts    the maximality of $S_{i-1}$.
 \item[(ii)] $d=1$ and $|V(H_0)|\geq 3$.\\
 Since $d=1$, we have $G_{v_0}=K_2$
and  $v_0=y_{i-1}$.
 The vertex $v_0$ is adjacent to $y_0$ and has $\Delta(H)-1$ neighbours in $V(H_0)$.
 The vertex $v_{i-1}$ is another neighbour of $v_0(=y_{i-1})$ which is in $H_{i-1}$. Because of $i\geq 3$ we have $v_{i-1}$ is distinct from $y_0\in H_{1}$ and the vertices in $H_0$. Therefore, $\deg(v_0)\geq \Delta(H)+1$ which is not possible.
\end{proof}
   \noindent Since $G_{v_0}\setminus\{y_0\}\subseteq S_{\ell}$, we have $v_0\in S_{\ell}$.
   As the case $V(H_0)\cap V(H_1)\neq \varnothing$, one can show that 
  $V(H_0)\setminus\{v_0\}\subset  V(H_\ell)$ and $H_0$ and $H_{\ell}$ are isomorphic to $K_{\Delta(H)-d+1}$.
  If $|V(H_0)|\geq 3$, choose two vertices $w_1,w_2$ in $V(H_0)\setminus\{v_0\}$. Then,
  $v_0\in N(w_1)\cap N(w_2)\cap S_1$ and hence Claims~\ref{cc}~and~\ref{c8}  imply the statement.
  
Assume that $d\geq 2$ and $w\in V(H_0)\cap V(H_\ell)$. Therefore, $w$ is adjacent to $y_{\ell-1}$ and $v_0$.
 The induced subgraph $H[S_{\ell}\cup\{w\}]$ contains a  unique copy of $G$, say $G_w$. By using Claim~\ref{cl11} and as $G_{v_0}\setminus\{y_0\}$ is connected and $v_0\in V(G_w)\cap (V(G_{v_0})\setminus\{y_0\})$, we have $ (V(G_{v_0})\setminus\{y_0\})\subseteq V(G_w)$.
 Consequently, $N(y_0)\cap V(G_{v_0})=N(w)\cap V(G_{v_0})$. If $w$ is adjacent to $y_0$, 
 then $y_0\in N(v_0)\cap N(w)\cap S_{0}$ and  Claims~\ref{cc}~and~\ref{c8}  imply the statement. Assume that $y_0$ is not adjacent to $w$. The proof of this case  is same as the proof of Claim~\ref{cl11} when $y_0$ is not adjacent to $v_{i-1}$.

\end{proof}
%%%%%%%%%%%%%%%%%%%%%%%%%%%%%%%%%%%%%%%%%%%%%%%%%%%%%%%%%%%%%%%%%%
%%%%%%%%%%%%%%%%%%%%%%%%%%%%%%%%%%%%%%%%%%%%%%%%%%%%%%%%%%%%%%%%%%
%%%%%%%%%%%%%%%%%%%%%%%%%%%%%%%%%%%%%%%%%%%%%%%%%%%%%%%%%%%%%%%%%%
Now we are in the position  to prove  Theorem {\rm\ref{1th}}.
\begin{proof}[\bf{Proof of Theorem~{\rm\ref{1th}}}] Let $\Delta(H)=\sum_{i=1}^{k}d_k$. 
The proof is by induction on $k$. The statement trivially holds for  $k=1$.
Therefore, we may assume that $k\geq 2$.
Let $V_1$ be a  subset of $V(H)$ such that $H[V_1]$ is $G_1$-free and $V_1$ has the maximum possible size.
 Hence, by Lemma \ref{le0}~(a), we have
$\Delta(H\setminus V_1)\leq \Delta(H)-d_1$.
If $H\setminus V_1$ does not contain any  $(\Delta(H)-d_1)$-regular components, 
   then from the induction hypothesis,   $H\setminus V_1$ can be decomposed into $k-1$ subsets $V_2,\ldots,V_k$ such that $H[V_i]$ is $G_i$-free for each $2\leq i\leq k$.
If $H\setminus V_1$  has a  $(\Delta(H)-d_1)$-regular component, then 
by lemma~\ref{le0}~(c), we must have either $G\cong K_{d+1}$  and $H\cong K_{\Delta(H)+1}$,  
 $G\cong K_2$  and $H\cong C_{2\ell+1}$ for some positive integer $\ell$,  
or $H\cong G$, which is not possible.

\end{proof}
 
%%%%%%%%%%%%%%%%%%%%%%%%%%%%%%%%%%%%%%%%%
\bibliographystyle{plain}

\begin{thebibliography}{10}

\bibitem{Bauer}
D.~Bauer, A.~Nevo, and E.~Schmeichel.
\newblock Vertex arboricity and vertex degrees.
\newblock {\em Graphs Combin.}, 32(5):1699--1705, 2016.

\bibitem{Bollob}
B\'{e}la Bollob\'{a}s and Bennet Manvel.
\newblock Optimal vertex partitions.
\newblock {\em Bull. London Math. Soc.}, 11(2):113--116, 1979.

\bibitem{bondy}
J.~A. Bondy and U.~S.~R. Murty.
\newblock {\em Graph theory with applications}.
\newblock American Elsevier Publishing Co., Inc., New York, 1976.

\bibitem{Borodin}
Oleg~V. Borodin.
\newblock Cyclic coloring of plane graphs.
\newblock volume 100, pages 281--289. 1992.
\newblock Special volume to mark the centennial of Julius Petersen's ``Die
  Theorie der regul\"{a}ren Graphs'', Part I.

\bibitem{Brooks}
R.~L. Brooks.
\newblock On colouring the nodes of a network.
\newblock {\em Proc. Cambridge Philos. Soc.}, 37:194--197, 1941.

\bibitem{Catlin}
Paul~A. Catlin.
\newblock Brooks' graph-coloring theorem and the independence number.
\newblock {\em J. Combin. Theory Ser. B}, 27(1):42--48, 1979.

\bibitem{Catlin1}
Paul~A. Catlin and Hong-Jian Lai.
\newblock Vertex arboricity and maximum degree.
\newblock {\em Discrete Math.}, 141(1-3):37--46, 1995.

\bibitem{Chartrand1}
G.~Chartrand, D.~P. Geller, and S.~Hedetniemi.
\newblock A generalization of the chromatic number.
\newblock {\em Proc. Cambridge Philos. Soc.}, 64:265--271, 1968.

\bibitem{Chartrand}
Gary Chartrand, Hudson~V. Kronk, and Curtiss~E. Wall.
\newblock The point-arboricity of a graph.
\newblock {\em Israel J. Math.}, 6:169--175, 1968.

\bibitem{MR778402}
Frank Harary.
\newblock Conditional colorability in graphs.
\newblock In {\em Graphs and applications ({B}oulder, {C}olo., 1982)},
  Wiley-Intersci. Publ., pages 127--136. Wiley, New York, 1985.

\bibitem{Harary}
Frank Harary and Paul~C. Kainen.
\newblock On triangular colorings of a planar graph.
\newblock {\em Bull. Calcutta Math. Soc.}, 69(6):393--395, 1977.

\bibitem{Hedet}
Stephen Hedetniemi.
\newblock On partitioning planar graphs.
\newblock {\em Canad. Math. Bull.}, 11:203--211, 1968.

\bibitem{Kronk}
Hudson~V. Kronk and John Mitchem.
\newblock Critical point-arboritic graphs.
\newblock {\em J. London Math. Soc. (2)}, 9:459--466, 1974/75.

\bibitem{Lovasz}
L.~Lov\'{a}sz.
\newblock On decomposition of graphs.
\newblock {\em Studia Sci. Math. Hungar.}, 1:237--238, 1966.

\bibitem{MR2327961}
Mart\'{\i}n Matamala.
\newblock Vertex partitions and maximum degenerate subgraphs.
\newblock {\em J. Graph Theory}, 55(3):227--232, 2007.

\bibitem{ras}
Andr\'{e} Raspaud and Weifan Wang.
\newblock On the vertex-arboricity of planar graphs.
\newblock {\em European J. Combin.}, 29(4):1064--1075, 2008.

\end{thebibliography}

\end{document}